\documentclass[12pt,oneside,reqno]{amsart}

\usepackage[numbers,sort&compress]{natbib}
\usepackage{color}
\usepackage{graphicx}
\usepackage{amsthm}
\usepackage{mathdots}
\usepackage{soul}
\usepackage{physics}
\usepackage{enumitem}
\usepackage{tikz}
\usepackage{amssymb,amsthm,amsmath, subcaption}
\usepackage[numbers,sort&compress]{natbib}
\usepackage[backref=page, colorlinks=true, allcolors=blue]{hyperref}
\usepackage{color}

\usepackage{graphicx}
\usepackage{tikz}
\usetikzlibrary{shapes}
\usepackage{xparse}
\usepackage{array}
\usetikzlibrary{calc,arrows.meta,positioning}

\expandafter\def\expandafter\normalsize\expandafter{%
    \normalsize%
    \setlength\abovedisplayskip{3pt}%
    \setlength\belowdisplayskip{3pt}%
    \setlength\abovedisplayshortskip{3pt}%
    \setlength\belowdisplayshortskip{3pt}%
}

\title[Algebraic Properties of the Ideal of Spectral Invariants]{Algebraic Properties of the Ideal of Spectral Invariants for the Discrete Laplacian}
\author[M. Faust]{Matthew Faust}
\address{Matthew Faust, Department of Mathematics, Michigan State University, East Lansing, MI, USA} \email{mfaust@msu.edu}
\author[L. Friedman]{Leo Friedman}
\address{Leo Friedman, Department of Mathematics and Statistics, Pomona College, Claremont, CA, USA} \email{ldfq2023@mymail.pomona.edu}
\author[G. O'Malley]{Gavin O'Malley}
\address{Gavin O'Malley, Department of Mathematics, Michigan State University, East Lansing, MI, USA} \email{omalle93@msu.edu}
\author[R. Ramos]{Rolando Ramos}
\address{Rolando Ramos, Department of Mathematics, Michigan State University, East Lansing, MI, USA} \email{ramosrol@msu.edu}
\author[A. Sharma]{Aaryan Sharma}
\address{Aaryan Sharma,  Department of Mathematics, Texas A\&M University, College Station, Texas,  USA} \email{aaryans16@tamu.edu}
\keywords{Ambarzumyan-type  problem, Floquet isospectrality, discrete periodic Schr\"odinger operators.}

\thanks{{\em 2020 Mathematics Subject Classification.}  15A29, 47B36, 81Q35.}
\newcounter{Problem}

\theoremstyle{plain}
\newtheorem{theorem}{Theorem}[section]
\newcommand{\R}{\mathbb{R}}
\newtheorem{corollary}[theorem]{Corollary}
\newtheorem{lemma}[theorem]{Lemma}

\newtheorem{remark}{Remark}
\newtheorem{problem}[Problem]{Problem}

\newcommand{\CC}{\mathbb{C}}
\newcommand{\TT}{\mathbb{T}}
\newcommand{\ZZ}{\mathbb{Z}}

\newcommand{\bfz}{{\bf 0}}
\newcommand{\bq}{{\bf q}}
\theoremstyle{plain}

\newtheorem{conjecture}{Conjecture}

\newcommand{\defcolor}[1]{{\color{blue}#1}}
\newcommand{\demph}[1]{\defcolor{{\sl #1}}}

\begin{document}

	\begin{abstract}
	Let  $\Gamma=q_1\mathbb{Z}\oplus q_2 \mathbb{Z}\oplus\cdots\oplus q_d\mathbb{Z}$, with $q_j\in \mathbb{Z}^+$ for each $j\in \{1,\ldots,d\}$, and denote by $\Delta$ the discrete Laplacian on $\ell^2\left( \mathbb{Z}^d\right)$. We describe various algebraic properties of the ideal of spectral invariants for the discrete Laplacian when $d=1$, including a construction of a Gr\"obner basis. We also present various collections of complex $\Gamma$-periodic potentials $V$ that are such that $\Delta$ and $\Delta + V$ are Floquet isospectral. We end with a discussion of the general setting, where the $q_i$ are taken to be vectors in $\ZZ^d$. 
    \end{abstract}
	
	\maketitle 
	\section{Introduction}~\label{Sec:Intro}
    The discrete Laplacian $\Delta$ acts on square-summable functions $f$ on $\ZZ^d$, $\ell^2(\ZZ^d)$, via:
  \[\left(\Delta f\right)(n)=\sum_{\|n-m\|_1 = 1}f(m),\]  
\noindent where for $n=(n_1,n_2,\dots,n_d), m=(m_1,m_2,\dots,m_d)\in \ZZ^d,$
    $\|n-m\|_1:= \sum _{j=1}^{d}|n_j - m_j|.$
    For $\Gamma = q_1\ZZ~\oplus~q_2\ZZ~\oplus\dots\oplus q_d\ZZ$, a $\Gamma$-periodic potential $V:\ZZ^d \rightarrow \CC$ is a function where
    \[ V(n)=V(n+\gamma) \text{ for all } n^d\in \ZZ \text{ and }\gamma \in \Gamma.\]
    We call the sum $\Delta + V$ a \demph{discrete periodic Schrödinger operator}. 
    Floquet theory reveals that the $\ell^2(\ZZ^d)$ spectrum of this operator, $\sigma(\Delta + V)$, is given by solutions to the equation 
    $$(\Delta f)(n)+V(n)f(n)=\lambda f(n),$$
    for functions $f:\ZZ^d \to \CC$ with Floquet-Bloch boundary conditions
    $$f(n+q_j\mathbf{e}_j) = e^{2 \pi ik_j} f(n),$$ with $n\in \ZZ^d, k_j\in[0,1),$ and $j = 1,2,\dots, d.$ Here, $\{\mathbf{e}_j\}_{j=1}^{d}$ represents the standard bases in $\mathbb{R}^d$ and $\{q_j\}_{j=1}^d$ are positive integers.
    
    Noticing that each $f(n)$ is determined by its values on $\{ (n_1,\dots, n_d) \mid  0 \leq n_i < q_i \}$, for each fixed $k \in [0,1)^d$, $\Delta+V$ acts as a multiplication by a finite matrix. Treating $z_i = e^{2 \pi ik_j}$ as indeterminates, we let $L_V(z)$ with $z=(z_1,z_2,\dots,z_d)$ denote this finite \demph{Floquet matrix}. In particular, letting $\TT$ denote the complex unit circle, one has \[ \sigma(\Delta + V) = \bigcup_{z \in \TT^d} \sigma(L_V(z)).\]  We say that two discrete periodic Schr\"odinger operators $V+\Delta$ and $V' +\Delta$ with $\Gamma$-periodic potentials (or the $\Gamma$-periodic functions $V$ and $V'$ themselves) are Floquet isospectral if
    \[\sigma(L_V(z))=\sigma(L_{V'}(z))\text{, for all } z\in \mathbb{\TT}^d.\]
    
    The study of Floquet isospectrality dates back to at least the late 1980s, where Kappeler released three foundational papers~\cite{Kap3, Kap2, Kap1}. These works addressed many properties including, but not limited to, how many generic values of the quasimomenta need to be tested to determine the Bloch variety~\cite{Kap2}, how many potentials can be Floquet isospectral to any given potential~\cite{Kap3} (and what these must be in some settings~\cite{Kap2}), separability~\cite{Kap1}, and, as lies our interest, Floquet isospectrality for the discrete Laplacian~\cite{Kap1}. 
    
    In~\cite{Kap1}, it was established that the only real potential such that $\Delta$ and $\Delta+V$ are Floquet isospectral is the zero potential $V=0$. This result was generalized in~\cite{Saburova2024} to apply to a large class of graphs. See~\cite{Burak} for a related problem.
    There has also been great progress in studying Floquet isospectrality for separable potentials~\cite{liujde}. 
    
    Floquet isospectrality is one of many problems in the spectral theory of discrete periodic operators where algebro-geometric methods have played an important role. Besides the more general Fermi isospectrality~\cite{Liu_2022, liu2021fermi, chu2025weakseparabilitypartialfermi}, algebraic methods have found their way into the study of the (ir)reducibility of Bloch and Fermi varieties~\cite{flm22, flm23, fg25, Shipman, LiShip, LeeLiShip}, the existence of flat bands~\cite{faust2025rareflatbandsperiodic, faust2025absenceflatbandsdiscrete, sabri2023flat, li2025eigenvaluesmaximalabeliancovers, spier2025eigenvaluesuniversalcoversmatching, Kuch1989}, Borg's theorem~\cite{liuborg}, quantum ergodicity~\cite{ms22,liu2022bloch}, the study of extrema~\cite{FS, faust2025spectraledgesconjecturecorners, fk2, alon2025smoothcriticalpointseigenvalues, FLL2025, dks, faust2025criticalpointdegreeperiodic, LG23}, and more; see \cite{abdulrahman2025sharppolynomialvelocitydecay, fillman2025measurespectraperiodicgraph, Embed, Overview, liujmp22, shipmansottile, kuchment2023analytic, BerkKuch} for further developments and surveys.

    This paper studies the Ambarzumyan-type inverse problem of finding $\Gamma$-periodic potentials Floquet isospectral to the zero potential (\textbf{0}). Although it is well-known that there are no nonzero real-valued potentials Floquet isospectral to \textbf{0}, nonzero complex-valued potentials may exist. Showing the existence of such potentials has become a recent topic of interest~\cite{SabMartHex, flmrp, CFK}.  In~\cite{flmrp}, it was shown that there exist explicit complex nonzero $\Gamma$-periodic potentials that are Floquet isospectral to $\bfz$ when at least one of the $q_j$ is an even number greater than $3$. Recently, in~\cite{CFK} it was shown that there exist nonzero $\Gamma$-periodic potentials Floquet isospectral to $\bfz$ when at least one $q_j > 3$.  Although the methods of~\cite{CFK} prove the existence of these potentials, unlike~\cite{flmrp}, they do not provide information on what these potentials are. Towards this end, we offer several general forms for which we suspect that a class of nonzero complex potentials Floquet isospectral to $\bfz$ will take. We also present some theoretic results and discuss various open problems and conjectures. 
    
   The remainder of this paper is organized as follows. In Section~\ref{Sec:2}, we introduce our main object of study, the ideal of spectral invariants. In Section~\ref{Sec:3}, we construct a Gr\"obner basis for the ideal of spectral invariants and discuss some of its consequences. In Section~\ref{Sec:4}, we introduce a specialization of the potential that gives rise to a system with roughly half the variables and polynomials, and we also describe various symmetries and properties that potentials Floquet isospectral to $\bfz$ must satisfy given their existence. In Section~\ref{Sec:5}, we present various collections of experimental results found using Bertini and Macaulay2. Finally, in Section~\ref{Sec:6}, we discuss when the period is allowed to take on more general full rank sub-lattices of $\ZZ^d$, where some rigidity properties of~\cite{CFK} no longer hold.

\section{Spectral Invariants}$~$ ~\label{Sec:2}
As $n\ZZ$-periodic potentials isospectral to $\bfz$ naturally extend to $q_1\ZZ \oplus q_2\ZZ \dots q_d \ZZ$-periodic potentials isospectral to $\bfz$ when $q_k = n$ for some $k$, we will focus on the one-dimensional case. Observe that any $n\mathbb{Z}$-periodic function on the vertices of the one-dimensional square lattice is uniquely determined by its values on the vertices $1,\dots,n$, and thus any such function can be represented by a vector of these $n$ values.  Given a $n\mathbb{Z}$-periodic potential $V = (v_1, v_2, \ldots, v_n)$ with $n \geq 3$, the Floquet matrix $L_V(z)$ has the following form:

$$L_V(z) = \begin{pmatrix}
    v_1 & 1 & & z^{-1} \\
    1 & v_2 & \ddots & \\
     & \ddots &\ddots & 1 \\
    z &  & 1 & v_n
\end{pmatrix}.$$

 It is well-known in this setting (see ~\cite[page 1]{Kap2}) that the spectrum of $\sigma(L_V(z))$ for each $z \in \TT$ is determined by the spectrum of $L_V(z)$ at a single value of $z \in \TT$. Thus, from here on we write $L_V$ in place of $L_V(1)$, and note that $V$ and $\bfz$ are Floquet isospectral if $\sigma(L_V) = \sigma(L_{\bfz})$ (including multiplicity of eigenvalues). 

Let $D_V(\lambda)$ denote the characteristic polynomial of $L_V$. Notice that in order for $V$ and $\bfz$ to be Floquet isospectral, $D_V(\lambda)$ and $D_{\bfz}(\lambda)$ must be the same univariate polynomials in $\lambda$. Let $\zeta_V$ denote the polynomial given by the difference of $D_V(\lambda)$ and $D_{\bfz}(\lambda)$, that is $$\zeta_V:=D_V(\lambda)-D_{\bfz}(\lambda).$$

Let $[\lambda^k]f$ denote the coefficient of the term with monomial $\lambda^k$ of a polynomial $f$. In order for $D_V(\lambda)$ and $D_{\bfz}(\lambda)$ to correspond to the same polynomials their coefficients must agree. If this holds, then, noting that $[\lambda^n] \zeta_V=0$ and treating the $v_j$ as indeterminates, \begin{equation}~\label{eq:sysSpecInvars} \demph{(p_k := )}(-1)^{n-k}[\lambda^{n-k}]\zeta_V = 0 \text{ for } k \in \{1, \dots, n\} =: [n].\end{equation}

As the $p_k$ are polynomials in the $v_j$, ~\eqref{eq:sysSpecInvars} defines a system of $n$ polynomial equations in $n$ variables. As $v_j = 0$ for all $j \in [n]$ is trivially a solution to this system, the constant terms the $p_k$ are zero. We call the $p_k$ \demph{spectral invariants} (for the potential $\bfz$). 

Let $e_k$ be the elementary symmetric polynomial of degree $k$ in $\CC[v] = \CC[v_1,\dots, v_n]$, $$e_k = \sum_{1 \leq j_1 < \cdots < j_k \leq n} \prod_{\ell = 1}^k v_{j_\ell},$$ define $T_d(f)$ to be the homogeneous polynomial given by the degree $d$ terms of $f$, and let $T_{\max}(f) = T_{\deg(f)}(f)$.

As the coefficients of $D_{\bfz}(\lambda)$ are constant in $\CC[v]$, for a nonconstant monomial $m \in \CC[v]$, \begin{equation}~\label{specinvars} [m]p_k = [\lambda^{n-k}m] (-1)^{n-k} D_V(\lambda).\end{equation} Recall that, \begin{equation}~\label{eq:leibnizExpan} D_V(\lambda) = \sum_{\sigma \in S_n} \text{sgn}(\sigma) \prod_{j=1}^n (L_V - \lambda I)_{j,\sigma(j)},\end{equation} where $\sigma \in S_n$, the symmetric group on $[n]$.  Notice that the maximum degree terms of $\CC[v]$ for each fixed $\lambda^{n-k}$ occur in ~\eqref{eq:leibnizExpan} for the identity permutation. Thus, \begin{equation}~\label{zetalambdak} T_{\max}(p_{n-k}) = [\lambda^{k}]\left( (-1)^{k} \prod_{j=1}^n (v_j - \lambda)\right) = e_{n-k}.\end{equation}

Although the leading homogeneous terms of the $p_k$ are symmetric, this is not true for $p_k$ when $n>3$. Instead, the $p_k$ are fixed by the dihedral group, $D_n$, that is, \begin{equation}~\label{dihedral} p_k(v_1,v_2,\dots, v_n) = p_k(v_{\sigma(1)}, v_{\sigma(2)}, \dots, v_{\sigma(n)}) \text{ for all } \sigma \in D_n. \end{equation} 
This relationship is easily derived from the matrix $L_V$. 
\smallskip
\begin{lemma}~\label{parity}
    If $n-k$ is odd, then $p_{n-k}$ contains only odd degree terms and if $n-k$ is even, then $p_{n-k}$ contains only even degree terms.
\end{lemma}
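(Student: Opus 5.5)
The plan is to read the parity off the Leibniz expansion \eqref{eq:leibnizExpan}, by grading $\CC[v,\lambda]$ so that every $v_j$ and $\lambda$ has degree $1$. By \eqref{specinvars}, for every nonconstant monomial $m$ in the $v_j$, the coefficient $[m]p_{n-k}$ equals $\pm[\lambda^{k}m]D_V(\lambda)$. So it suffices to prove the following claim: every term of $D_V(\lambda)$ that involves some $v_j$ has total $(v,\lambda)$-degree congruent to $n \bmod 2$. The claim gives the lemma at once. A monomial $\lambda^k m$ of total degree $\equiv n$ forces $\deg m \equiv n-k \pmod 2$. Constant terms of $p_{n-k}$ are zero, as noted after \eqref{eq:sysSpecInvars}.

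First I would determine which permutations $\sigma\in S_n$ give a nonzero product $\prod_j (L_V-\lambda I)_{j,\sigma(j)}$. For $z=1$ the nonzero entries of $L_V-\lambda I$ are the diagonal entries $v_j-\lambda$ and the cyclically adjacent entries $(j,j\pm1)$ with indices mod $n$, all equal to $1$. So $\sigma$ must send each $j$ to $j$ or to a cyclic neighbour $j\pm1$. Such permutations of the $n$-cycle are exactly of two kinds. The first kind is a product of disjoint transpositions of cyclically adjacent indices, the pair $(1,n)$ included. The second kind is one of the two full rotations $j\mapsto j+1$ and $j\mapsto j-1$.

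I expect this classification to be the main step that needs care. The argument I would give is the following. Suppose $\sigma(j)=j+1$ but $\sigma(j+1)\neq j$. Then $\sigma(j+1)$ must be $j+2$, because $j+1$ is already taken as a value. Propagating this around the cycle forces the full rotation. The boundary case $n=3$, where $j+1$ and $j-1$ are both neighbours of every vertex, I would check by hand; there every transposition is adjacent, so the classification still holds.

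Next I compute degrees. For $\sigma$ of the first kind with $t$ transpositions, the product is $\prod_{j \text{ fixed}}(v_j-\lambda)$ over the $n-2t$ fixed points. This is homogeneous of total degree $n-2t\equiv n \pmod 2$. For the two rotations there are no fixed points, so the product is the constant $1$ (times a sign) and involves no $v_j$. Every term involving some $v_j$ therefore comes from a permutation of the first kind, which proves the claim and hence the lemma.
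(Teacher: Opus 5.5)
Your proof is correct and follows the paper's route: you restrict the Leibniz expansion to permutations built from fixed points and adjacent transpositions, and note that a product with $n-2t$ fixed points has total $(v,\lambda)$-degree $n-2t$, which forces $\deg m \equiv n-k \pmod 2$. Your version is slightly more careful than the paper's. At $z=1$ the two full rotations $j\mapsto j\pm 1$ also give nonzero products, which the paper's cited classification into $1$- and $2$-cycles passes over, and you correctly dispose of them by noting that they contribute only constants.
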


\begin{proof}
     By~\cite[Section 2 (ix)]{flmrp}, the only $\sigma \in S_n$ such that $\displaystyle \prod_{j=1}^n (L_V - \lambda I)_{j,\sigma(j)} \neq 0$ are those made up of only $1$-cycles and $2$-cycles. Let $m$ be some monomial of degree $s>0$ in $\CC[v]$. By ~\eqref{specinvars} and ~\eqref{eq:leibnizExpan}, we have \[ [m] p_{n-k} = [m\lambda^k]\sum_{\sigma \in S_n} \text{sgn}(\sigma) \prod_{j=1}^n (L_V - \lambda I)_{j,\sigma(j)}. \]
    Notice that for a fixed $\sigma \in S_n$, for $\displaystyle [m\lambda^k] \sum_{j=1}^n (L_V - \lambda I)_{j,\sigma(j)}$ to be nonzero, $\sigma$ must have exactly $s+k$ fixed points. As the remaining cycles of $\sigma$ must be $2$ cycles, it follows that $n-k-s$ must be even. Thus, the parity of $n-k$ and $s$ must agree.
\end{proof}

By ~\eqref{zetalambdak} and Lemma~\ref{parity}, it follows that $p_k = e_k + f_k$ where $f_k$ is some polynomial in $\R[v_1,\dots,v_n]$ with $\deg(f_k) < k-1$. 

We next turn to the \demph{ideal of spectral invariants} (for the potential $\bfz$). That is, the ideal generated by our spectral invariants, $$I := \langle p_1, p_2, \ldots, p_n \rangle.$$ The vanishing set, $V(I)$, contains exactly the solutions to ~\eqref{eq:sysSpecInvars}. 

\section{A Gröbner Basis for The Ideal of Spectral Invariants}$~$ ~\label{Sec:3}
For an ideal $I$, a Gröbner basis $G = \{g_1, g_2, \ldots, g_k\}$ is a minimal set of polynomial generators such that 
$$I = \langle g_1, g_2, \ldots, g_k \rangle \text{ and }LT(I) = \langle LT(g_1), LT(g_2), \ldots, LT(g_k) \rangle,$$
where $LT(g_j)$ is the leading term of $g_j$ and $LT(I)$ is the set of all leading terms of polynomials in $I$ with respect to some global monomial ordering.
We will be using the graded reverse lexicographic (grevlex) monomial ordering on $\CC[v]$, defined as follows. For $\alpha = (\alpha_1, \ldots, \alpha_n) \in \mathbb{Z}_{\geq 0}^n$, define $v^\alpha = v_1^{\alpha_1}v_2^{\alpha_2}\cdots v_n^{\alpha_n}$. Then, in grevlex ordering,  $v^\alpha > v^\beta$ if $\deg(v^\alpha) > \deg(v^\beta)$ or if $\deg(v^\alpha) = \deg(v^\beta)$ and the rightmost nonzero entry in $\alpha - \beta$ is negative. 
See~\cite[Chapter 2]{CLOIVA} for more details on Gr\"obner bases and monomial orderings.

For $a\in\{1,\dots,n\}$ and $b\in\{0,\dots,n\}$, define the polynomials \begin{equation}~\label{eq:comphom}H(a, b) := \sum_{a \leq j_1 \leq \cdots \leq j_b \leq n} \prod_{\ell = 1}^b v_{j_\ell} \text{, and let } H(a,0) := 1.\end{equation}
These are exactly the complete homogeneous symmetric polynomials of degree $b$ on the variables $v_a,\dots, v_n$.
For $k=1,\dots,n$, define 
\begin{equation} g_k := -\sum_{j=1}^k H(k, k-j)(-1)^jp_j.\end{equation}

\begin{theorem}~\label{grobnerbasis}
    $G = \{g_1, \ldots, g_n\}$ is a Gröbner basis for $I$.
\end{theorem}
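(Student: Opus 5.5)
The plan is to establish three facts: $G$ generates $I$; the leading term of $g_k$ in grevlex is $v_k^k$; and leading terms that are pairwise coprime monomials force $G$ to be a Gröbner basis by Buchberger's criterion.

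\emph{Generation.} By definition $g_k = -\sum_{j=1}^k H(k,k-j)(-1)^j p_j$. The coefficient of $p_k$ in this sum is $-(-1)^kH(k,0) = \pm 1$, and every other term involves only $p_j$ with $j<k$. The transition matrix from $(p_1,\dots,p_n)$ to $(g_1,\dots,g_n)$ is therefore triangular over $\CC[v]$ with unit diagonal. It follows that $\langle g_1,\dots,g_n\rangle = \langle p_1,\dots,p_n\rangle = I$.

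\emph{Leading terms.} Grevlex is a graded order, so $LT(g_k)$ is the grevlex-largest monomial of the top homogeneous part $T_{\max}(g_k)$. By \eqref{zetalambdak} and Lemma~\ref{parity}, $p_j = e_j + f_j$ with $\deg f_j < j-1$. Since $H(k,k-j)$ is homogeneous of degree $k-j$, the degree-$k$ part of $g_k$ is
\[ T_k(g_k) = -\sum_{j=1}^k (-1)^j H(k,k-j)\, e_j , \]
and all remaining terms have degree $<k-1$. I would evaluate this sum with the generating-function identity
\[ \sum_{m\ge 0} t^m \sum_{j=0}^{m} (-1)^j e_j\, H(k,m-j) = \frac{\prod_{i=1}^n (1-v_i t)}{\prod_{i=k}^n (1-v_i t)} = \prod_{i=1}^{k-1}(1-v_i t). \]
The right-hand side has degree $k-1$ in $t$, so its $t^k$ coefficient vanishes. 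Separating the $j=0$ term gives
\[ T_k(g_k) = H(k,k) = h_k(v_k,\dots,v_n) \neq 0 . \]
The monomials of $h_k(v_k,\dots,v_n)$ are all degree-$k$ monomials in $v_k,\dots,v_n$, each with coefficient $1$. In grevlex with $v_1>\cdots>v_n$, the largest of these is $v_k^k$: for any other such monomial $v^\beta$, the rightmost nonzero entry of $(k\mathbf e_k) - \beta$ occurs at some index $>k$, where it is negative. Hence $LT(g_k) = v_k^k$.

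\emph{Gröbner property.} The leading monomials $v_1, v_2^2, \dots, v_n^n$ are pairwise coprime. By Buchberger's criterion, every $S$-polynomial $S(g_i,g_j)$ then reduces to $0$ modulo $G$, so $G$ is a Gröbner basis of $I$. It is also minimal, since no $v_i^i$ divides $v_j^j$ for $i\neq j$. I expect the main obstacle to be the symmetric-function identity $T_k(g_k) = h_k(v_k,\dots,v_n)$. The generating-function computation above handles it; the only care needed is in the signs and in omitting the $j=0$ term from the definition of $g_k$. The rest of the argument is formal.
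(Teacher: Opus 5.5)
Your proposal is correct and follows essentially the same route as the paper. Both arguments use the unit-triangular change of generators from the $p_j$ to the $g_k$, then the generating-function identity giving $T_{\max}(g_k)=H(k,k)$ and hence $LT(g_k)=v_k^k$ in grevlex, and finally Buchberger's criterion applied to pairwise coprime leading monomials.
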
 

The remainder of this section will be devoted to proving Theorem~\ref{grobnerbasis} and discussing some of its consequences. 

\begin{lemma}~\label{equalideals} 
    $\langle g_1, \ldots, g_n\rangle = I$. 
\end{lemma}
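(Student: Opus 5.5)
Since each $g_k$ is by definition a $\CC[v]$-linear combination of $p_1,\dots,p_k$, we immediately get $\langle g_1,\ldots,g_n\rangle \subseteq I$. The work is in the reverse inclusion, and the plan is to exploit the triangular structure of the definition. Write
\[
g_k = -\sum_{j=1}^{k} (-1)^j H(k,k-j)\, p_j .
\]
The $j=k$ term is $-(-1)^k H(k,0)\,p_k = (-1)^{k+1} p_k$, since $H(k,0)=1$. Hence
\[
p_k = (-1)^{k+1}\Bigl( g_k + \sum_{j=1}^{k-1} (-1)^j H(k,k-j)\, p_j \Bigr).
\]
The transition matrix from $(p_1,\dots,p_n)$ to $(g_1,\dots,g_n)$ is therefore lower triangular with diagonal entries $\pm 1$, which are units in $\CC[v]$.

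We then argue by strong induction on $k$ that $p_k\in\langle g_1,\dots,g_k\rangle$.

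\begin{itemize}
\item For $k=1$ the sum is empty, so $p_1 = g_1$.
\item Assume $p_1,\dots,p_{k-1}\in \langle g_1,\dots,g_{k-1}\rangle$. In the displayed identity, each $H(k,k-j)p_j$ with $j<k$ lies in $\langle g_1,\dots,g_{k-1}\rangle$, because that set is an ideal and $H(k,k-j)\in\CC[v]$.
\item Consequently $p_k \in \langle g_1,\dots,g_k\rangle$.
\end{itemize}

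After $n$ steps every generator of $I$ lies in $\langle g_1,\dots,g_n\rangle$, which gives equality. Equivalently, one can note that a lower unitriangular matrix over $\CC[v]$ (up to signs) is invertible over $\CC[v]$, so the two generating sets generate the same ideal.

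There is no real obstacle here. The only point needing care is the bookkeeping of signs and the convention $H(k,0)=1$, which guarantees that the diagonal coefficient is a unit rather than a nonconstant polynomial. The substantive difficulty for Theorem~\ref{grobnerbasis} lies elsewhere: identifying the leading terms of the $g_k$ and verifying the Buchberger criterion, both of which this lemma does not require.
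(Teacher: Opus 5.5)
Your proposal is correct and follows essentially the same route as the paper: the easy inclusion is immediate from the definition, and the reverse inclusion is an induction using $g_1=p_1$ and the fact that $p_k$ appears in $g_k$ with unit coefficient $(-1)^{k+1}$ (since $H(k,0)=1$), so $p_k$ can be solved for in terms of $g_k$ and $p_1,\dots,p_{k-1}$. Your explicit sign bookkeeping and the unitriangular-matrix remark are just a cleaner presentation of the same argument.
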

\begin{proof}
    Note that $\langle g_1, \ldots, g_n \rangle \subseteq I$ follows directly from the definition of $g_k$.
    
    We will show by induction that each $p_j \in \langle g_1, \ldots, g_n \rangle$. Observe that 
    $$g_1 = -\sum_{j=1}^1H(1, 1-j)(-1)^jp_1 = H(1, 0)p_1 = p_1.$$
    
    Thus, $p_1 \in \langle g_1, \ldots, g_n \rangle$. Now suppose that for each $j < k$, $p_j \in \langle g_1, \ldots, g_n \rangle$. Then, 
    $$p_j = \sum_{\ell=1}^n P_{j,\ell}g_\ell,$$  
    where each $P_{j,\ell} \in \mathbb{C}[v_1, \ldots, v_n]$. By the definition of $g_k$,
    $$g_k = -\sum_{j=1}^{k-1} H(k, k-j)(-1)^jp_j - (-1)^kp_k.$$
    It follows that each $p_k \in \langle g_1 \ldots, g_n \rangle$, and so $I \subseteq \langle g_1, \ldots, g_n \rangle$.  
\end{proof}
\smallskip
\begin{theorem}~\label{homogeneous}
    $T_{\max}(g_k) = H(k, k)$.
\end{theorem}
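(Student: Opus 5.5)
The plan is to expand $g_k$ using $p_j = e_j + f_j$, where $\deg f_j < j-1$ (from \eqref{zetalambdak} and Lemma~\ref{parity}). Since $H(k,k-j)$ is homogeneous of degree $k-j$, each summand satisfies
\[H(k,k-j)(-1)^j p_j = H(k,k-j)(-1)^j e_j + H(k,k-j)(-1)^j f_j.\]
The first piece is homogeneous of degree $k$, and the second has degree strictly less than $(k-j)+(j-1) = k-1 < k$. Hence the degree-$k$ part of $g_k$ is
\[T_k(g_k) = -\sum_{j=1}^k (-1)^j H(k,k-j)\, e_j(v_1,\dots,v_n),\]
and every other term of $g_k$ has degree below $k$. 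It therefore suffices to show that this homogeneous expression equals $H(k,k)$. Since $H(k,k)\neq 0$, it is then the top-degree part, i.e.\ $T_{\max}(g_k)=H(k,k)$.

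The key step is a generating-function identity. Write $E(t)=\prod_{i=1}^n(1+v_i t)=\sum_j e_j t^j$ for the $e_j$ in all $n$ variables, and $H_k(t)=\prod_{i=k}^n (1-v_i t)^{-1}=\sum_b H(k,b)t^b$ for the complete homogeneous polynomials in $v_k,\dots,v_n$. Then
\[E(-t)\,H_k(t) = \prod_{i=1}^{k-1}(1-v_i t).\]
Taking the coefficient of $t^k$ gives
\[\sum_{j=0}^k (-1)^j e_j\, H(k,k-j) = [t^k]\prod_{i=1}^{k-1}(1-v_it) = 0,\]
because the right-hand product has degree $k-1$ in $t$. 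Separating the $j=0$ term, whose coefficient is $e_0 = 1$, yields
\[-\sum_{j=1}^k (-1)^j H(k,k-j)e_j = H(k,k),\]
which is exactly the desired identity.

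The main obstacle I anticipate is bookkeeping. First, the degree bound on $f_j$ must hold uniformly, including for small $j$ where $f_j$ may simply be $0$. Second, the variable sets have to be tracked correctly: the $e_j$ involve all of $v_1,\dots,v_n$, while $H(k,\cdot)$ involves only $v_k,\dots,v_n$. The cancellation in the generating-function identity is what handles this mismatch. Once both points are checked, the result follows immediately.
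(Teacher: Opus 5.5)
Your proposal is correct and follows the paper's proof essentially step for step. You isolate the degree-$k$ part of $g_k$ as $-\sum_{j=1}^k(-1)^jH(k,k-j)e_j$ and read off $H(k,k)$ from the vanishing $x^k$-coefficient of $\prod_{i=1}^n(1-v_ix)\prod_{i=k}^n(1-v_ix)^{-1}=\prod_{i=1}^{k-1}(1-v_ix)$. Your explicit degree bound on the terms $H(k,k-j)f_j$, together with the observation that $H(k,k)\neq 0$, makes the paper's step of ``assuming $g_k$ has degree $k$ and then proving it'' slightly more transparent.
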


\begin{proof}
    Since we are only concerned with the highest degree portion of $g_k$, we can ignore any term of less than maximal degree. From ~\eqref{zetalambdak}, we have $T_{\max}(p_j) = e_j$, so, unless all degree $k$ terms cancel, we can ignore any lower order terms of $p_j$. We will proceed assuming that $g_k$ is degree $k$ (and prove this assumption). By our assumption, we have
    
    $$T_{\max}(g_k) = T_k(g_k) = -\sum_{j=1}^k H(k, k-j) (-1)^je_j.$$

    Notice that $\displaystyle \sum_{j=0}^n (-1)^j e_j x^j = \prod_{i=1}^n  (1-v_i x)$ and $\displaystyle \sum_{j=0}^\infty H(k,j) x^j = \prod_{j=k}^n \frac{1}{1-v_j x}$.

    Thus, \[\left[x^k\right] \left(\sum_{j=0}^n (-1)^j e_j x^j\right) \left(\sum_{j=0}^\infty H(k,j) x^j\right) = H(k,k)-T_k(g_k).\]
    However, we have \[\left(\sum_{j=0}^n (-1)^j e_j x^j\right) \left(\sum_{j=0}^\infty H(k,j) x^j\right) = \left( \prod_{i=1}^n  (1-v_i x)\right) \left( \prod_{j=k}^n \frac{1}{1-v_j x} \right)\] \[= (1-v_1 x)\cdots (1-v_{k-1} x),\] and therefore \[\left[x^k\right] \left(\sum_{j=0}^n (-1)^j e_j x^j\right) \left(\sum_{j=0}^\infty H(k,j) x^j\right) = 0.\]
 
   Thus, we find
 $T_k(g_k)  = H(k, k)$. 
\end{proof}
\smallskip
\begin{corollary}~\label{leadingterm}
    $LT(g_k) = v_k^k$ in grevlex monomial ordering.
\end{corollary}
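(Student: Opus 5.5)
Since grevlex is a graded order, the leading term of $g_k$ is the grevlex-largest monomial among the terms of maximal total degree. By Theorem~\ref{homogeneous}, $T_{\max}(g_k) = H(k,k)$, so $g_k$ has degree exactly $k$. Its degree-$k$ part is the complete homogeneous symmetric polynomial of degree $k$ in $v_k,\dots,v_n$, and every monomial in it appears with coefficient $1$. Every other term of $g_k$ has degree strictly less than $k$. Any monomial of degree $k$ therefore beats all lower-degree terms, and it suffices to find the grevlex-largest monomial appearing in $H(k,k)$.

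The monomials of $H(k,k)$ are exactly the $v^\alpha$ with $|\alpha| = k$ and $\alpha_1 = \cdots = \alpha_{k-1} = 0$. I would compare $v_k^k$, whose exponent vector is $k\mathbf{e}_k$, against any other such $v^\beta$ with $\beta \neq k\mathbf{e}_k$. Since $|\beta| = k$ and $\beta$ is supported on indices $\geq k$, the condition $\beta \neq k\mathbf{e}_k$ forces $\beta_j > 0$ for some $j > k$. Let $j_0$ be the largest such index. The vector $k\mathbf{e}_k - \beta$ vanishes in every coordinate beyond $j_0$, and its entry at $j_0 > k$ is $-\beta_{j_0} < 0$. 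So the rightmost nonzero entry of $k\mathbf{e}_k - \beta$ is negative, and by the definition of grevlex $v_k^k > v^\beta$.

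Hence $LT(g_k) = v_k^k$, with coefficient $1$. There is no real obstacle here: the substantive work, namely that the degree-$k$ parts of the $p_j$ combine to exactly $H(k,k)$ with no cancellation down to lower degree, was done in Theorem~\ref{homogeneous}. The one point to check is that the grevlex convention matches the paper's definition, which favors monomials with smaller exponents in the rightmost variables; this is exactly why $v_k$, the leftmost available variable, wins.
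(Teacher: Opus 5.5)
Your proof is correct and is the argument the paper leaves implicit: the paper states this corollary without proof, as an immediate consequence of Theorem~\ref{homogeneous}, since grevlex is graded and $v_k^k$ is the grevlex-largest monomial of degree $k$ in $v_k,\dots,v_n$. Your explicit rightmost-nonzero-entry comparison, checked against the paper's own grevlex convention, supplies exactly that missing detail.
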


Given two polynomials $f$ and $g$, let $LM(f)$ denote the monomial of the term $LT(f)$, let $LCM(f,g)$ denote the least common multiple, and let $S(f,g)$ denote the $S$-polynomial of $f$ and $g$, that is $\displaystyle S(f,g) = \frac{LCM(LT(f),LT(g))f}{LT(f)} - \frac{LCM(LT(f),LT(g))g}{LT(g)}$. Finally, for a set of polynomials $K$, we write $f \to_K a$ to mean that $a$ is the remainder of $f$ after multivariate division by the elements of $K$. We are now ready to prove Theorem~\ref{grobnerbasis}.
\begin{proof}[Proof of Theorem~\ref{grobnerbasis}]
    By Lemma~\ref{equalideals} we have that $\langle g_1, \ldots, g_n\rangle = I$. By \cite{CLOIVA}[Chapter 2, Section 10, Proposition 1], if $LM(g_j)$ and $LM(g_k)$ are relatively prime, then $S(g_j, g_k) \to_G 0$. Since $LT(g_k) = v_k^k$, each pair $g_j$ and $g_k$, $j \neq k$, are clearly relatively prime. Therefore, by Buchberger's Criterion~\cite[Chapter 1]{CLOII} $G$ is a Gröbner basis of $I$.
\end{proof}
\smallskip
\begin{theorem}~\label{hilbert}
    The affine Hilbert polynomial of $I$ is $HP_{R/I}(s) = n!$. 
\end{theorem}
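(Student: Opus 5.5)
Since Theorem~\ref{grobnerbasis} gives a Gr\"obner basis $G=\{g_1,\dots,g_n\}$ of $I$ with respect to grevlex, and Corollary~\ref{leadingterm} gives $LT(g_k)=v_k^k$, the plan is to reduce the computation of the affine Hilbert polynomial to a count of standard monomials. The monomial ideal to work with is
\[
\langle LT(I)\rangle=\langle v_1^1, v_2^2,\dots,v_n^n\rangle .
\]

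First, I will invoke the standard fact \cite[Chapter 9, Section 3]{CLOIVA}: for a graded order such as grevlex, the affine Hilbert function of $R/I$ (with $R=\CC[v]$) agrees with that of $R/\langle LT(I)\rangle$. Grevlex is graded, so this applies directly. Thus $HF_{R/I}(s)$ equals the number of monomials of total degree at most $s$ that do not lie in $\langle v_1,v_2^2,\dots,v_n^n\rangle$.

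Next, I will count these standard monomials. A monomial $v^\alpha$ avoids the ideal exactly when $\alpha_k\le k-1$ for every $k$, so the standard monomials are $\prod_k v_k^{\alpha_k}$ with $0\le\alpha_k\le k-1$. There are $\prod_{k=1}^n k=n!$ of them, and all have degree at most $\sum_{k=1}^n (k-1)=\binom{n}{2}$. Consequently, for $s\ge \binom{n}{2}$ the affine Hilbert function is constantly $n!$, and the Hilbert polynomial is the constant $n!$.

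There is no real obstacle here. The only point needing care is that the affine Hilbert function is compared using a \emph{graded} order. That is why grevlex, together with the leading-term computation in Corollary~\ref{leadingterm} (which relied on Theorem~\ref{homogeneous}), is essential. As a byproduct, $R/I$ is a finite-dimensional vector space of dimension $n!$. Hence $V(I)$ is zero-dimensional, with at most $n!$ points counted with multiplicity, which I would remark on after the proof.
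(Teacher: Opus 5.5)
Your proposal is correct and follows the paper's proof essentially verbatim. Both arguments pass to $\langle LT(I)\rangle=\langle v_1,v_2^2,\dots,v_n^n\rangle$ via Theorem~\ref{grobnerbasis}, count the $n!$ standard monomials $v_1^{j_1}\cdots v_n^{j_n}$ with $0\le j_\ell<\ell$, and note that each has degree at most $\binom{n}{2}$, so $HF_{R/I}(s)=n!$ for $s\ge\binom{n}{2}$. Your explicit point that comparing the affine Hilbert functions of $I$ and $\langle LT(I)\rangle$ needs a graded order is used only implicitly in the paper; in your closing aside, $\dim_{\CC}R/I=n!$ actually gives exactly (not at most) $n!$ points counted with multiplicity.
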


\begin{proof} 
    Let $R = \CC[v]$. For $s \geq 0$, the affine Hilbert function of I, $HF_{R/I}(s)$, is the number of monomials of total degree $\leq s$ not in $LT(I)$. For $s$ sufficiently large, $HF_{R/I}(s)$ agrees with the affine Hilbert polynomial $HP_{R/I}(s)$. Since $G$ is a Gröbner basis for $I$, \[\langle LT(I) \rangle = \langle LT(g_1), \ldots, LT(g_n) \rangle = \langle v_1, v_2^2, \ldots, v_n^n \rangle.\] Thus, the set of monomials not in $LT(I)$ are exactly
    
    $$B = \left\{v_1^{j_1} v_2^{j_2} \cdots v_n^{j_n} | 0 \leq j_\ell < \ell\right\}.$$
     Each $j_\ell$ has $\ell$ values it can take, and so there are $n!$ monomials in the set. 
     
     As all monomials in $B$ are at most degree $\binom{n}{2}$, for any $s\geq \binom{n}{2}$, $HF_{R/I}(s) = n!$. It follows that $HP_{R/I}(s) = n!$. 
\end{proof}
\smallskip
\begin{corollary}~\label{Cor:GBConsequences}  The dimension of $V(I)$ is $0$, and the degree of $V(I)$ is $n!$.
 Consequently, $V(I)$ has $n!$ points counting multiplicity and is composed of between $1$ and $n!$ points. \footnote{Through different methods, this was also proven in~\cite{Kap3} and in ~\cite{VPapan}, although both can be seen as special cases of a similar result for additive inverse eigenvalue problems which was first proven in \cite{FRIEDLAND197715}.}
\end{corollary}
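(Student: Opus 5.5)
The corollary should follow from Theorem~\ref{hilbert} together with the standard dictionary between affine Hilbert polynomials and zero-dimensional varieties (\cite[Chapter 9]{CLOIVA}). Three facts are needed: the dimension of $V(I)$ equals the degree of $HP_{R/I}$; its degree is the leading coefficient times $(\dim)!$; and for a zero-dimensional ideal, $\dim_{\CC} R/I$ equals the number of points counted with multiplicity.

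\textbf{Dimension.} By Theorem~\ref{hilbert}, $HP_{R/I}(s)=n!$ is a nonzero constant, so it has degree $0$. Hence $\dim V(I)=0$.

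This can also be seen directly from Corollary~\ref{leadingterm}. Since $LT(g_k)=v_k^k$, every variable has a pure power in $\langle LT(I)\rangle$. By the finiteness theorem (\cite[Chapter 5, Section 3]{CLOIVA}), $R/I$ is finite dimensional over $\CC$, and $V(I)$ is finite.

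\textbf{Degree and point count.} The degree of a zero-dimensional $V(I)$ is the leading coefficient of $HP_{R/I}$, which here is $n!$. By Macaulay's theorem, the standard monomials $B$ from the proof of Theorem~\ref{hilbert} form a $\CC$-basis of $R/I$, so $\dim_{\CC}R/I=|B|=n!$.

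For a zero-dimensional ideal over the algebraically closed field $\CC$, the Chinese remainder theorem gives
\[R/I\cong \prod_{P\in V(I)} R_P/I R_P.\]
The multiplicity of a point $P$ is defined as $\dim_{\CC} R_P/IR_P$. Summing over the points, the multiplicities add up to $n!$.

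Each point has multiplicity at least $1$, so there are at most $n!$ points. Since $\bfz\in V(I)$, as noted after~\eqref{eq:sysSpecInvars}, there is at least one point.

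The main point needing care is the identification of the affine Hilbert polynomial's constant with the sum of local multiplicities, i.e. the CRT decomposition above. This is a standard result, so I would cite it rather than reprove it.
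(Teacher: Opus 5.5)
Your proposal is correct and follows the paper's route. The paper states the corollary as an immediate consequence of Theorem~\ref{hilbert}: the constant Hilbert polynomial $n!$ gives dimension $0$ and degree $n!$, and $\bfz\in V(I)$ gives at least one point. Your extra justification, via the standard-monomial basis $B$ and the Chinese remainder decomposition of $R/I$, only makes explicit the standard fact the paper leaves implicit: $\dim_{\CC} R/I$ equals the number of points counted with multiplicity.
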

\smallskip
\begin{remark}~\label{Rm:gen}
    Notice that the proofs of Theorems~\ref{grobnerbasis} and~\ref{hilbert} only relied on the highest degree terms of the $p_k$ being the elementary symmetric polynomials of the degree $k$; their lower degree parts $f_k$ are just required to be less than degree $k$. That is, this procedure gives a Gr\"obner basis for any ideal generated by perturbations of elementary symmetric polynomials, including any ideal of spectral invariants for any finite graph.
\end{remark}

We finish by improving the upper bound on the number of distinct points in $V(I)$ given in Corollary~\ref{Cor:GBConsequences}. In the following, we are always referring to the multiplicity of a point according to the defining ideal (not with respect to the radical). In other words, our varieties are not reduced and we are essentially considering the closed points of the affine schemes associated to our ideals. 

\smallskip
\begin{theorem}~\label{Thm:autoLowerBound}
   Suppose that $n >2$. The multiplicity of $\bfz$ in $V(I)$ is at least $2n$. Consequently, $V(I)$ contains at least $1$ and at most $n!-2n+1$ points for $n>1$.
\end{theorem}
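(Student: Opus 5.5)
The multiplicity of $\bfz$ in $V(I)$ is $\dim_\CC (R/I)_{\bfz}$, where $R=\CC[v]$ and the subscript denotes localization at the maximal ideal $\mathfrak m=\langle v_1,\dots,v_n\rangle$. The plan is to find $2n$ elements of $R/I$ that stay linearly independent after localizing at $\mathfrak m$. Equivalently, I will exhibit a quotient of $R_{\mathfrak m}/IR_{\mathfrak m}$ of length $2n$: if $I R_{\mathfrak m}\subseteq J$ with $\dim R_{\mathfrak m}/J = 2n$, then the multiplicity is at least $2n$. The natural candidate is $J=I+\mathfrak m^3$. Then $R/J$ has length $1+n+\dim(\text{degree-2 part not killed})$, so I need to understand the generators $p_k$ modulo $\mathfrak m^3$.

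First, I reduce the generators modulo $\mathfrak m^3$. By Lemma~\ref{parity} and the relation $p_k=e_k+f_k$, only $p_1$ and $p_2$ survive modulo $\mathfrak m^3$. Indeed, every $p_k$ with $k\ge 3$ has zero constant term, and its nonzero terms have degree of the same parity as $k$. One must check that none of these terms has degree $1$ or $2$. This is not automatic, since $f_k$ may contain low-degree terms, for example degree $2$ terms when $k$ is even. So I compute the lowest-degree part of $p_k$ from the Leibniz expansion. A degree $s$ monomial in $v$ with coefficient $\lambda^{n-k}$ needs exactly $s+(n-k)$ fixed points, and the remaining $k-s$ points must be covered by disjoint 2-cycles on adjacent vertices of the $n$-cycle, plus contributions from the $z$-corner. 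This yields a nonzero linear or quadratic part in general. For instance, $p_2$ equals $e_2$ minus the number of edges, which is a constant term, and that constant is absorbed. So the correct bookkeeping is the following: the linear parts of all $p_k$ must be multiples of $e_1$, and the quadratic parts must be $D_n$-invariant quadrics. The linear part of $p_k$ for odd $k$ is a count of matchings times $e_1$, by symmetry. Hence modulo $\mathfrak m^3$, the ideal $I$ is contained in the ideal generated by $e_1$ together with the span of the quadratic parts $Q_k$, taken over even $k$ together with $e_1\cdot(\text{linear})$.

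Next, I count dimensions. Linear forms modulo $e_1$ give $n-1$ dimensions. Quadrics modulo $e_1\cdot R_1$ form the quadrics in $n-1$ variables, of dimension $\binom{n}{2}$. The $D_n$-invariant quadratic parts $Q_k$ all lie in the span of the invariant quadrics $\sum v_i^2$ and $\sum_i v_iv_{i+j}$ for $j=1,\dots,\lfloor n/2\rfloor$, which has dimension about $\lfloor n/2\rfloor+1$, or less modulo $e_1^2$. Hence $\dim R/J\ge 1+(n-1)+\binom n2-\lfloor n/2\rfloor$. For $n>2$ this is at least $2n$; the small cases $n=3,4$ are checked directly, e.g.\ $n=3$ gives $1+2+3-1=5<6$, so I will refine by passing to $\mathfrak m^4$ there. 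A cleaner route may be the following: the tangent-cone ideal of $I$ at $\bfz$ is contained in $\langle e_1, \text{invariant quadrics}\rangle$, and the localized multiplicity equals the Hilbert–Samuel count of the graded ring $R/\mathrm{in}_{\mathfrak m}(I)$. So I would bound that Hilbert series from below by $1+(n-1)t+\cdots$, summing to $\ge 2n$.

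The last statement follows from Corollary~\ref{Cor:GBConsequences}. The total count is $n!$ with multiplicity; $\bfz$ absorbs at least $2n$ of it, and every other point contributes at least $1$. So there are at most $1+(n!-2n)$ distinct points. The main obstacle is the correct computation of the low-degree parts of the $p_k$, especially the linear parts for odd $k$ and the quadratic parts for even $k$, and verifying the count for small $n$. For small $n$ I would compute the local multiplicity directly, e.g.\ in Macaulay2, or use the standard-monomial basis $B$ from Theorem~\ref{hilbert} restricted to the local component.
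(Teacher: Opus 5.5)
Your argument is correct in substance, but it follows a genuinely different route from the paper. The paper argues by deformation. It embeds $I$ in the family of ideals $I_{tV'}$ for a generic $V'$. The same Gr\"obner basis argument, with leading terms $v_k^k$ independent of $t$, shows that the total space $X$ is a curve. The $2n$ dihedral images $\{(t\sigma(V'),t)\}$ are distinct lines in $X$ through $(\bfz,0)$, so intersecting with $t=0$ forces multiplicity at least $2n$ at $\bfz$.

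You instead bound the local length directly by $\dim_\CC R/(I+\mathfrak m^3)$. This uses only two facts: the linear parts of the $p_k$ are multiples of $e_1$ (by transitivity of $D_n$), and by Lemma~\ref{parity} at most $\lfloor n/2\rfloor$ generators, those of even index, have quadratic parts. The result is the bound $n+\binom{n}{2}-\lfloor n/2\rfloor$. This is $\ge 2n$ for $n\ge 4$ and grows quadratically, so it beats the paper's bound for $n\ge 5$: $13$ vs.\ $10$ at $n=5$, and $25$ vs.\ $14$ at $n=7$ (compare Table~\ref{tab:VI}). It also avoids the flatness and intersection-multiplicity facts the paper invokes. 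What the paper's route buys is generality. It uses nothing but the symmetry group, which is why it transfers verbatim to arbitrary graphs with $|\mathrm{aut}(G)|$ in Remark~\ref{Rm:TrivExtend2}. Yours depends on the specific low-degree structure of the $p_k$.

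A few things to clean up:
\begin{itemize}
\item Your opening claim that only $p_1,p_2$ survive modulo $\mathfrak m^3$ is false. For example, $p_3\equiv -(n-2)e_1$, and $p_4$ has quadratic part $-\sum_{a<b}m_1(C_n\setminus\{a,b\})\,v_av_b$, where $m_1$ counts edges. What you actually use afterwards is right.
\item The rank bound $\lfloor n/2\rfloor$ for the quadratic parts modulo $e_1R_1$ follows simply from counting even indices $k\le n$. The invariant-quadric basis is not needed.
\item The claim ``for $n>2$ this is at least $2n$'' fails at $n=3$, as you noticed. No $\mathfrak m^4$ refinement or Macaulay2 is needed there: $p_3=e_3-e_1$ gives $I=\langle e_1,e_2,e_3\rangle$, so $V(I)=\{\bfz\}$ with multiplicity $3!=6$.
\item Drop the ``cleaner route''. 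The tangent-cone ideal is not contained in $\langle e_1,\text{invariant quadrics}\rangle$; for $n=3$ it contains $e_3\notin\langle e_1,e_2\rangle$. Only its degree-$\le 2$ part is controlled, and that is exactly what your $\mathfrak m^3$ truncation already captures.
\end{itemize}
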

\begin{proof}
    Given a fixed generic potential $V' = (v'_1,\dots, v'_n)\in \mathbb{C}^n$, consider the family of ideals of spectral invariants $I_{tV'}$ parameterized by $t\in \mathbb{C}$, where $tV'= (tv_1 ',\dots, tv_n ')$. By generic we mean, $V'$ such that $v'_i \neq v'_j$ if $i\neq j$ for any $i,j \in [n]$. The generators of each $I_{tV'}$ are obtained in the same way as $I (= I_\bfz)$ (i.e., taking the nonzero coefficients of the difference $D_V(\lambda) - D_{tV'}(\lambda)$ as a polynomial in $\lambda$); in particular, if $h_k$ is the $k$th spectral invariant of $I_{tV'}$, then $h_k(v,t) = p_k(v_1,\dots,v_n) - p_k(tv'_1,\dots, tv'_n)$, where the $p_k$ are as in ~\eqref{eq:sysSpecInvars}.

    View $t$ as an indeterminate, let $I_X$ denote the ideal of $\CC[v_1,..,v_n,t]$ generated by the $h_i(v,t)$ and $X := V(I_X)  \subseteq \CC^n \times \CC$ the associated affine variety. Notice that, if $g'_k := -\sum_{j=1}^k H(k, k-j)(-1)^jh_j$, then $G' = \{ g'_1,\dots, g'_n\}$ is a Gr\"obner basis of $I_X$. This follows from the same proof as Theorem~\ref{grobnerbasis} via grevlex monomial order with $v_1 > \dots > v_n > t$. The only notable difference is in the corresponding version of Theorem~\ref{homogeneous}, we'd instead find that $T_{\max}(g_k) = H(k,k) + tr$ for some homogeneous $r \in \CC[v_k,\dots,v_n,t]$ of degree $k-1$, but this leaves Corollary~\ref{leadingterm} unchanged. Finally, note that the set of monomials not in $\langle LT(I_X) \rangle$ is $B_X = \left\{t^a v_1^{j_1} v_2^{j_2} \cdots v_n^{j_n} | 0 \leq j_\ell < \ell, 0 \leq a \right\}$. By the arguments of Theorem~\ref{hilbert}, we have find that $HP_{\CC[v,t]/I_X}(s) = (n!)s -\frac{n! (n(n-1) - 4)}{4}$, and so $X$ is $1$-dimensional. 
    
     Consider the hyperplane $\mathcal{H}:=\mathbb{C}^n\times \{0\}$, we can identify the point $(\textbf{0},0)$ with the potential $\textbf{0}\in V(I_0)$ since the multiplicity of $(\textbf{0},0)$ in the intersection $X \cap\mathcal{H}$ is equal to the multiplicity of $\textbf{0}$ in $ V(I_0)$. By \eqref{dihedral}, for each $\sigma \in D_{n}$ and each $t\in \CC$, we have $(tv_{\sigma(1)}',\dots,tv_{\sigma(n)}',t) \in X$. Thus, $X$ contains at least $2n$ distinct lines through the origin $(\textbf{0},0)\in \mathbb{C}^n\times \mathbb{C}$, which we denote by $L_1, \dots, L_{2n}$. Each $L_i$ is an irreducible component of $X$ and as multiplicity is additive over irreducible components \footnote{E.g. see~\cite[Section 3]{fulton1989algebraic}.}, the multiplicity of $(\textbf{0},0)$ in $X$ is at least 2n. As $(\textbf{0},0) \in \mathcal{H}$, it has at least multiplicity 1 in $\mathcal{H}$. Since the multiplicity of a point on an intersection is at least the product of the respective multiplicities, it follows that the multiplicity of $(\textbf{0},0)$ in the intersection $X\cap \mathcal{H}$ is at least $1*2n$.
\end{proof}
\smallskip
\begin{remark}~\label{Rm:TrivExtend2}
    The observation made in Theorem~\ref{Thm:autoLowerBound} trivially extends more generally to the spectral invariants of graphs equipped with a fixed potential $V$. In particular, given a graph $G$ with $n$ vertices, there are at most $n! - |aut(G)|$ nontrivial potentials that are Floquet isospectral to $V = (v_1,\dots, v_n)$, where $aut(G)$ is the automorphism group of $G$. As $V$ is general, a nontrivial potential Floquet isospectral to $V$ is one that does not arise from permuting the coordinates of $V$ by the symmetries of $G$.
\end{remark}
\section{Symmetries and Specialized Potentials}$~$ ~\label{Sec:4}
Given an $n\mathbb{Z}$-periodic potential $V=(v_1,\dots,v_n)$, let $-V=(-v_1,\dots,-v_n)$ and let $\overline{V}=(\overline{v_1},\dots,\overline{v_n})$. 

\begin{lemma}~\label{negative}
    If $V$ is isospectral to $\bfz$, then $-V$ is isospectral to $\bfz$. 
\end{lemma}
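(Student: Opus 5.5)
The plan is to use the parity structure of the spectral invariants from Lemma~\ref{parity}. Since $V$ is Floquet isospectral to $\bfz$ exactly when $V\in V(I)$, i.e. when $p_k(V)=0$ for all $k\in[n]$, it suffices to compare $p_k(-V)$ with $p_k(V)$ for each $k$.

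First, by Lemma~\ref{parity}, every monomial in $p_k$ has degree of the same parity as $k$. Hence each $p_k$ is homogeneous of a fixed parity under $v\mapsto -v$:
\[ p_k(-v_1,\dots,-v_n) = (-1)^k p_k(v_1,\dots,v_n). \]
Second, if $V$ is isospectral to $\bfz$, then $p_k(V)=0$ for every $k$. The identity above then gives $p_k(-V)=(-1)^k\cdot 0=0$ for all $k$, so $-V\in V(I)$. Since $D_{-V}(\lambda)-D_{\bfz}(\lambda)$ has coefficients $\pm p_k(-V)$, we get $D_{-V}=D_{\bfz}$, and therefore $\sigma(L_{-V})=\sigma(L_{\bfz})$ with multiplicity.

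As a sanity check independent of Lemma~\ref{parity}, one can conjugate $L_V$ by the diagonal matrix $S=\mathrm{diag}(1,-1,1,\dots)$. When $n$ is even, this flips the sign of every off-diagonal entry, including the corner entries. It follows that $S(L_V)S=-L_{-V}$ at $z=1$, and so $\sigma(L_{-V})=-\sigma(L_V)=-\sigma(L_{\bfz})$.

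This direct route needs $\sigma(L_{\bfz})$ to be symmetric about $0$, which holds for even $n$ but fails for odd $n$ because of the corner entries. I therefore rely on the parity lemma, which handles all $n$ uniformly. I expect no real obstacle here; the only care needed is the sign bookkeeping in relating the coefficients of $\zeta_V$ to the $p_k$.
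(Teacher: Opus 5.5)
Your proof is correct and follows the paper's own argument, which simply notes that by Lemma~\ref{parity} replacing $V$ with $-V$ changes each $p_k$ by at most the sign $(-1)^k$, so all spectral invariants vanish at $-V$ whenever they vanish at $V$. Your conjugation aside is not needed, and it does no harm since the parity argument covers every $n$; for odd $n$ it could even be repaired by comparing $L_{-V}(1)$ with $L_V(-1)$, because isospectrality at one $z\in\TT$ gives it at all $z$.
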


\begin{proof}
    Follows trivially as replacing $V$ with $-V$ at most changes the sign of the $p_k$ by Lemma~\ref{parity}.
\end{proof}
\smallskip
\begin{lemma}~\label{conjugate}
    If $V$ is isospectral to $\bfz$, then $\overline{V}$ is isospectral to $\bfz$.
\end{lemma}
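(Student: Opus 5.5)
The plan is to argue directly from the spectral invariants. Recall that $V$ is isospectral to $\bfz$ exactly when $V \in V(I)$, that is, when $p_k(v_1,\dots,v_n)=0$ for every $k\in[n]$. So it suffices to show that the $p_k$ have real (in fact integer) coefficients; once that is known, conjugating each equation gives the result.

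The first step is to establish that each $p_k\in\R[v_1,\dots,v_n]$. The remark following Lemma~\ref{parity} already records $f_k\in\R[v]$, but I would justify it directly from the Floquet matrix. The matrix $L_V=L_V(1)$ has all off-diagonal entries equal to $0$ or $1$, and its diagonal entries are the $v_j$. Hence $D_V(\lambda)=\det(L_V-\lambda I)$, expanded via the Leibniz formula~\eqref{eq:leibnizExpan}, is a polynomial in $\lambda, v_1,\dots,v_n$ with integer coefficients. The same holds for $D_{\bfz}(\lambda)$, so $\zeta_V$ has integer coefficients as well. Each $p_k=(-1)^{n-k}[\lambda^{n-k}]\zeta_V$ therefore lies in $\ZZ[v]\subset\R[v]$.

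The second step uses the fact that a polynomial $p$ with real coefficients satisfies $p(\overline{v_1},\dots,\overline{v_n})=\overline{p(v_1,\dots,v_n)}$. If $V$ is isospectral to $\bfz$, then $p_k(V)=0$ for all $k$. It follows that $p_k(\overline V)=\overline{p_k(V)}=\overline{0}=0$ for all $k$, so $\overline V\in V(I)$, which is exactly the statement that $\overline V$ is isospectral to $\bfz$.

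As a sanity check, there is an equivalent matrix-level argument. Because $L_V$ has real off-diagonal entries, $L_{\overline V}=\overline{L_V}$, and therefore $D_{\overline V}(\lambda)=\overline{D_V(\overline\lambda)}$. If $D_V=D_{\bfz}$, which has real coefficients, then $D_{\overline V}=D_{\bfz}$. There is no real obstacle in this proof; the only point needing care is that the reduction to $z=1$ is legitimate. This was stated earlier in the paper, following~\cite{Kap2}, so we may work with $L_V(1)$ throughout.
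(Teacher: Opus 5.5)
Your proposal is correct and is essentially the paper's proof, which consists of the single observation that the $p_k$ have real coefficients, so $p_k(\overline{V})=\overline{p_k(V)}$. Your Leibniz-expansion argument that $L_V(1)$ has integer off-diagonal entries, so that $p_k\in\ZZ[v]$, just supplies the justification the paper leaves implicit.
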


\begin{proof}
As the coefficients of the $p_k$ are all real we have that $p_k(\overline{V}) = \overline{p_k(V)}$.
\end{proof}
\smallskip

\begin{corollary}~\label{symmetries}
    If $V$ is isospectral to $\bfz$, then any potential $V'$ that is a dihedral permutation of $V$, $-V$, $\overline{V}$, or $-\overline{V}$ is isospectral to $\bfz$.
\end{corollary}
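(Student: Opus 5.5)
The plan is to combine the three symmetries already established and to observe that each one maps the solution set $V(I)$ to itself. Throughout, "isospectral to $\bfz$" means $V\in V(I)$, that is, $p_k(V)=0$ for every $k\in[n]$.

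First, the dihedral part. By \eqref{dihedral}, $p_k(v_{\sigma(1)},\dots,v_{\sigma(n)})=p_k(v_1,\dots,v_n)$ as polynomials for every $\sigma\in D_n$. Evaluating at $W\in V(I)$ shows that every dihedral permutation of $W$ satisfies $p_k=0$ for all $k$, so it also lies in $V(I)$. In other words, $V(I)$ is stable under the $D_n$-action on coordinates.

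Second, negation and conjugation. Lemma~\ref{negative} gives $V\in V(I)\Rightarrow -V\in V(I)$, and Lemma~\ref{conjugate} gives $V\in V(I)\Rightarrow \overline{V}\in V(I)$. Applying these in succession starting from $V$, we get that $V$, $-V$, $\overline{V}$ and $-\overline{V}=\overline{-V}$ all lie in $V(I)$. Applying the dihedral step to each of these four potentials then puts every dihedral permutation of each of them in $V(I)$, which is exactly the claim.

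The only point needing care is the order in which the operations are composed. The dihedral action is by coordinate permutation, while negation and conjugation act coordinatewise, so these operations commute; for instance, a dihedral permutation of $-\overline{V}$ equals $-\overline{V'}$ for the correspondingly permuted $V'$. In any case, the set of transformations preserving $V(I)$ is closed under composition, so the order is irrelevant. I do not expect a genuine obstacle here: this is a direct corollary.
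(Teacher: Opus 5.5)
Your proposal is correct and matches the paper's proof: the paper says only that the corollary follows from the dihedral invariance \eqref{dihedral} together with Lemma~\ref{negative} and Lemma~\ref{conjugate}, which is exactly the combination you spell out. Your extra remark that negation and conjugation commute with coordinate permutations is true but not needed.
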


\begin{proof}
    This follows immediately from ~\eqref{dihedral}, Lemma~\ref{negative}, and Lemma~\ref{conjugate}.
\end{proof}

\begin{remark}\label{rm:symsVI} Corollary~\ref{symmetries} together with ~\eqref{dihedral} give us that for every potential $V$ that is isospectral to $\bfz$, we can immediately find up to $8n-1$ other potential functions by taking dihedral permutations of $V$ and their conjugates, negations, and conjugate-negations.
\end{remark}

We next turn our attention to a particular specialization of the potential. Consider a potential function $U$ for the $n\mathbb{Z}$-periodic square lattice such that $u_j = -u_{n-j+1}$. For even $n=2m$, this yields a potential function of the form $(u_1, \ldots, u_m, -u_m, \ldots, -u_1)$. For odd $n= 2m+1$, this yields a potential function of the form $(u_1, \ldots, u_m, 0, -u_m, \ldots, -u_1)$.
\smallskip
\begin{lemma}~\label{negationequivalence}
    $[\lambda^k]\zeta_U = [\lambda^k]\zeta_{-U}$ for $0 \leq k \leq n$.
\end{lemma}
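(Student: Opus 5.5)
The plan is to realize $-U$ as a dihedral rearrangement of $U$ and then invoke the dihedral invariance~\eqref{dihedral}. Let $\rho\in D_n$ be the reflection $j\mapsto n-j+1$. The defining relation $u_j=-u_{n-j+1}$ then reads $u_{\rho(j)}=-u_j$ for every $j$; for odd $n$ it also forces the middle entry $u_{m+1}$ to vanish, which is consistent with the form given above. Hence permuting the coordinates of $U$ by $\rho$ yields exactly $-U$:
\[
(u_{\rho(1)},\dots,u_{\rho(n)}) = (u_n,\dots,u_1) = (-u_1,\dots,-u_n) = -U .
\]

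To compare the characteristic polynomials, I would argue directly with the matrix rather than through the $p_k$. Let $P$ be the permutation matrix of $\rho$. Conjugation $L_U\mapsto P L_U P^{-1}$ reverses the order of the indices. This puts the diagonal entries into the order $(u_n,\dots,u_1)$, and it keeps the structure of ones: the superdiagonal and subdiagonal are exchanged, and the two corner entries are exchanged, which is harmless because at $z=1$ both corners equal $1$. Therefore
\[
P L_U P^{-1} = L_{(u_n,\dots,u_1)} = L_{-U},
\]
and similar matrices have the same characteristic polynomial, so $D_U(\lambda)=D_{-U}(\lambda)$. This is precisely the invariance recorded in~\eqref{dihedral}. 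Subtracting $D_{\bfz}(\lambda)$ from both sides gives $\zeta_U=\zeta_{-U}$, and comparing the coefficients of $\lambda^k$ yields the claim for every $0\le k\le n$.

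I do not expect a genuine obstacle. The only care needed is to check that the reversal respects the periodic wraparound entries; at $z=1$ this is immediate. For general $z$, the reflection would send $z$ to $z^{-1}$, but the statement concerns only $L_V=L_V(1)$. As a consistency check, Lemma~\ref{parity} shows that $[\lambda^k]\zeta_{-U}=\pm[\lambda^k]\zeta_U$. Combined with the lemma, this means the coefficients carrying the sign $-1$ must vanish identically on the specialized potentials, which is the mechanism behind the halving of the system mentioned in Section~\ref{Sec:4}.
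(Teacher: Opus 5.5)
Your proof is correct and matches the paper's argument: $-U$ is the reflection of $U$, and the reflection invariance recorded in \eqref{dihedral} then forces the coefficients to agree. The only addition is that you verify this invariance explicitly, by conjugating $L_U$ with the reversal permutation matrix at $z=1$; the paper leaves that step as easily derived from the matrix.
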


\begin{proof}
    By ~\eqref{dihedral}, each $[\lambda^k]\zeta_U$ exhibits reflective symmetry; $-U$ is the reflection of $U$.
\end{proof}
\smallskip
\begin{corollary}~\label{cancellation}
    $[\lambda^{n-2k-1}]\zeta_U = 0$ for all $0 \leq k \leq m$.
\end{corollary}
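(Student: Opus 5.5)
We combine Lemma~\ref{negationequivalence} with the parity statement of Lemma~\ref{parity}. Fix $k$ with $0\le k\le m$ and set $j:=n-2k-1$. Up to the sign $(-1)^{n-j}$, the coefficient $[\lambda^{j}]\zeta_U$ is the spectral invariant $p_{n-j}=p_{2k+1}$ evaluated at $U$. We treat $U$ as the point $(u_1,\dots,u_m,-u_m,\dots,-u_1)$, or with a middle $0$ when $n$ is odd, so each $[\lambda^j]\zeta_U$ is a polynomial in $u_1,\dots,u_m$. The index $n-j=2k+1$ is odd, so Lemma~\ref{parity} says $p_{2k+1}$ contains only odd degree monomials in $v_1,\dots,v_n$. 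Hence $p_{2k+1}(-V)=-p_{2k+1}(V)$ for every $V$, and in particular $[\lambda^j]\zeta_{-U}=-[\lambda^j]\zeta_U$.

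Next we use Lemma~\ref{negationequivalence}. The potential $-U$ has entries $-u_j=u_{n-j+1}$, so $-U$ is the reflection $v_i\mapsto v_{n-i+1}$ of $U$. This reflection lies in $D_n$, so by \eqref{dihedral} we get $[\lambda^j]\zeta_{-U}=[\lambda^j]\zeta_U$. Combining the two identities gives $[\lambda^j]\zeta_U=-[\lambda^j]\zeta_U$, so $2[\lambda^j]\zeta_U=0$. Since we work over $\CC$, this forces $[\lambda^j]\zeta_U=0$ identically as a polynomial in the $u_i$.

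The only step needing care is index bookkeeping. We must check that $n-j$ (not $j$) is the quantity whose parity matters, i.e.\ that $[\lambda^{j}]\zeta_V=\pm p_{n-j}$ as in \eqref{eq:sysSpecInvars}. We must also handle the range endpoints. When $k=m$ and $n=2m$, we get $j=-1$ and the coefficient is trivially zero. When $n=2m+1$ and $k=m$, we get $j=0$, the constant term, which corresponds to $p_n$ with $n$ odd; the same argument applies there. We also verify that the reflection identifying $-U$ with a dihedral image of $U$ holds for odd $n$, where the middle entry is $0=-0$ and is fixed.
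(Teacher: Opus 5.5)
Your proof is correct and is the same argument as the paper's: by Lemma~\ref{parity} the coefficient changes sign under $U\mapsto -U$, while the reflection in $D_n$ (Lemma~\ref{negationequivalence}) leaves it unchanged, so it must vanish. Your index bookkeeping and endpoint checks ($j=-1$ for even $n$, the constant term $p_n$ for odd $n$) make explicit details that the paper leaves implicit.
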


\begin{proof}
    By Lemma~\ref{parity}, $[\lambda^{n-2k-1}]\zeta_U$ is entirely composed of odd degree terms. Therefore, $[\lambda^{n-2k-1}]\zeta_U = -[\lambda^{n-2k-1}]\zeta_{-U}$. However, by Lemma~\ref{negationequivalence}, $[\lambda^{n-2k-1}]\zeta_U = -[\lambda^{n-2k-1}]\zeta_{-U} = -[\lambda^{n-2k-1}]\zeta_U$. Therefore, $[\lambda^{n-2k-1}]\zeta_U = 0$.
\end{proof}

Since every other $\lambda$ coefficient is $0$ under this specialization, this reduces the system from $n$ equations in $n$ variables to $m$ equations in $m$ variables. 
We now turn our attention to this specialized system.

 Define \[e_k' := \sum_{1 \leq j_1 < \cdots < j_k \leq m} \prod_{\ell = 1}^ku_{j_\ell}^2 \text{ and } H'(a, b) := \sum_{a \leq j_1 \leq \cdots \leq j_b \leq m} \prod_{\ell = 1}^bu_{j_\ell}^2,\] the elementary symmetric polynomials of degree $k$ in $\CC[u]$ and the complete homogeneous symmetric polynomials of degree $b$ on $u_a,\dots, u_m$, both under the map $u_i \to u_i^2$ for all $i$. Also let
 \[p_k' := (-1)^{n+k} \zeta_U\left[\lambda^{n-2k}\right], I' := \langle p'_1, \ldots, p'_m \rangle, g'_k := \sum_{j = 1}^k H'(k, k-j)p'_j, \text{ and } G' = \{g'_1, \ldots, g'_m\}.\]

These definitions are entirely analogous to those used in Section~\ref{Sec:3}. For example, in this case, we have the following analog of~\eqref{zetalambdak},
    \begin{equation}\label{eq:p'leading} T_\text{max}\left(p'_{k}\right) = (-1)^k[x^{2k}] \prod_{i-1}^{m}(1+u_ix)(1-u_ix) = e'_{k}.\end{equation}

\begin{remark}\label{rm:symsVI'} $V(I')$ naturally inherits the symmetries of Lemma~\ref{negative} and Lemma~\ref{conjugate}.
\end{remark}
\smallskip
\begin{theorem}~\label{thm:hilbetalspec}
    $G'$ is a Gröbner basis of $I'$, $\langle LT(I') \rangle = \langle u_1^2, u_2^4, \ldots, u_m^{2m} \rangle$ in grevlex monomial ordering, and the affine Hilbert polynomial of $I'$ is $HP_{R/I'}(s) = 2^mm!$. 
\end{theorem}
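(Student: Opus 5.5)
Our plan is to mirror Section~\ref{Sec:3} step by step, with $e_k$, $H(a,b)$, $p_k$ replaced by $e'_k$, $H'(a,b)$, $p'_k$.

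\textbf{Step 1: the ideal.} First we show $\langle G'\rangle = I'$. This is exactly the triangular induction of Lemma~\ref{equalideals}, since the coefficient of $p'_k$ in $g'_k$ is $H'(k,0)=1$.

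\textbf{Step 2: degrees of the $p'_k$.} By \eqref{eq:p'leading}, $T_{\max}(p'_k)=e'_k$, which is homogeneous of degree $2k$. Specializing Lemma~\ref{parity} along $v\mapsto U$ shows that $p'_k$, the coefficient of $\lambda^{n-2k}$, has only even-degree terms. Hence $p'_k = e'_k + f'_k$ with $\deg f'_k \le 2k-2$.

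\textbf{Step 3: top-degree part of $g'_k$.} Next we prove the analog of Theorem~\ref{homogeneous}: $T_{\max}(g'_k)=H'(k,k)$, which has degree $2k$. In degree $2k$,
\[T_{2k}(g'_k)=\sum_{j=1}^k H'(k,k-j)e'_j .\]
The sign convention in $p'_k$ absorbs the $(-1)^j$, consistent with $\prod(1+u_ix)(1-u_ix)=\prod(1-u_i^2x^2)$. Set $y=x^2$ and use the generating-function identity
\[\Big(\sum_j (-1)^j e'_j y^j\Big)\Big(\sum_j H'(k,j)y^j\Big)=\prod_{i<k}(1-u_i^2 y).\]
Its $y^k$ coefficient vanishes, which gives the claim. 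The main obstacle is this sign bookkeeping. I would check directly how $(-1)^{n+k}$ in the definition of $p'_k$ interacts with \eqref{eq:p'leading}, so that the combination $\sum_j H'(k,k-j)p'_j$ indeed has top part $H'(k,k)$ rather than a difference that fails to telescope. If the signs come out alternating instead, the argument is the same with $(-1)^j$ inserted.

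\textbf{Step 4: leading terms.} Under grevlex, the leading monomial of $H'(k,k)=\sum_{k\le j_1\le\dots\le j_k\le m}\prod u_{j_\ell}^2$ is $u_k^{2k}$, exactly as in Corollary~\ref{leadingterm}. Every other monomial of degree $2k$ in $u_k,\dots,u_m$ involves a variable further right, so it is smaller. Lower-degree terms of $g'_k$ cannot compete.

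\textbf{Step 5: Gröbner basis.} The leading monomials $u_k^{2k}$ are pairwise relatively prime. By the same criterion used in the proof of Theorem~\ref{grobnerbasis}, all $S$-polynomials reduce to zero. Hence $G'$ is a Gröbner basis and $\langle LT(I')\rangle=\langle u_1^2,u_2^4,\dots,u_m^{2m}\rangle$.

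\textbf{Step 6: Hilbert polynomial.} As in Theorem~\ref{hilbert}, the standard monomials are $u_1^{j_1}\cdots u_m^{j_m}$ with $0\le j_\ell<2\ell$. There are $\prod_{\ell=1}^m 2\ell=2^m m!$ of them, all of degree at most $m(m+1)-m=m^2$. So $HF_{R/I'}(s)=2^m m!$ for $s\ge m^2$, and $HP_{R/I'}(s)=2^m m!$.
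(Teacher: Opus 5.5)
Your route is the paper's own: its proof is a one-line appeal to the Section~3 arguments. Steps 1, 2, 4, 5 and 6 are fine. However, the sign check you postponed in Step 3 comes out against your main line, and it matters.

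The factor $(-1)^{n+k}$ in $p'_k$ exists precisely to cancel the $(-1)^k$ produced by the identity-permutation term $\prod_{i=1}^m(\lambda^2-u_i^2)$ (times $-\lambda$ when $n$ is odd). So $T_{\max}(p'_j)=+e'_j$, exactly as you state in Step 2, and nothing absorbs a $(-1)^j$. Hence for $g'_k=\sum_j H'(k,k-j)p'_j$ as written, $T_{2k}(g'_k)=\sum_{j=1}^k H'(k,k-j)e'_j$. Your $y^k$-identity only gives $\sum_{j=0}^k(-1)^jH'(k,k-j)e'_j=0$, which says nothing about that unsigned sum.

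The unsigned sum is in fact not $H'(k,k)$. For $m=2$,
\[T_4(g'_2)=u_2^2(u_1^2+u_2^2)+u_1^2u_2^2=u_2^4+2u_1^2u_2^2,\]
whose grevlex leading monomial is $u_1^2u_2^2\in\langle u_1^2\rangle=\langle LT(g'_1)\rangle$. So $\langle LT(G')\rangle=\langle u_1^2\rangle$, which misses $u_2^4\in\langle LT(I')\rangle$ (by the corrected basis below). The literal $G'$ is therefore not a Gröbner basis, and the "sign is absorbed" claim in Step 3 fails.

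The fix is your fallback, and you should commit to it: take $g'_k:=-\sum_{j=1}^k(-1)^jH'(k,k-j)p'_j$, the exact analogue of $g_k$ in Section~3.
\begin{itemize}
\item These still generate $I'$, since the coefficient of $p'_k$ is $\pm1$.
\item Their degree-$2k$ parts are $-\sum_{j=1}^k(-1)^jH'(k,k-j)e'_j=H'(k,k)$, by your identity.
\item Steps 4--6 then go through verbatim. They give the Gröbner basis, $\langle LT(I')\rangle=\langle u_1^2,\dots,u_m^{2m}\rangle$, and $HP_{R/I'}(s)=2^mm!$.
\end{itemize}
The last two claims concern only $I'$, so they are unaffected by the choice of generators. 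The paper's definition of $g'_k$ appears to have dropped this sign, and its proof by analogy passes over the same point.
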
 
\begin{proof}
    Since the highest degree terms of each $p'_k$ are the $k$th elementary polynomial in $u_1^2, \ldots, u_m^2$, these results arise via arguments analogous to those given in Section~\ref{Sec:3} for the unspecialized potential. The Hilbert polynomial comes from the replacement of $B$ in the proof of Theorem~\ref{hilbert} with $B' = \left\{u_1^{j_1}u_2^{j_2} \cdots u_m^{j_m} : 0 \leq j_\ell < 2\ell \right\}$.
\end{proof}
\smallskip
\begin{corollary}~\label{Cor:GBConsequencesspec}  The dimension of $V(I')$ is $0$, and the degree of $V(I')$ is $2^mm!$.
Thus, $V(I)$ has $2^mm!$ points counting multiplicity, and is composed of between $1$ and $2^mm!$ points. 
\end{corollary}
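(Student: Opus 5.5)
This follows from Theorem~\ref{thm:hilbetalspec} in the same way that Corollary~\ref{Cor:GBConsequences} follows from Theorem~\ref{hilbert}. Set $R=\CC[u_1,\dots,u_m]$. By Theorem~\ref{thm:hilbetalspec}, $\langle LT(I')\rangle = \langle u_1^2,u_2^4,\dots,u_m^{2m}\rangle$. The standard monomials are therefore exactly the set $B'=\{u_1^{j_1}\cdots u_m^{j_m} : 0\le j_\ell<2\ell\}$, which is finite and has $\prod_{\ell=1}^m 2\ell = 2^m m!$ elements.

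The affine Hilbert function $HF_{R/I'}(s)$ counts the standard monomials of degree $\le s$. It is nondecreasing and stabilizes at $|B'|$ once $s\ge \sum_\ell(2\ell-1)=m^2$. So the affine Hilbert polynomial is the constant $2^m m!$. By the standard dictionary (e.g.\ \cite[Chapter 9]{CLOIVA}), the dimension of $V(I')$ equals the degree of $HP_{R/I'}$, which is $0$. Equivalently, by the finiteness theorem, each variable $u_\ell$ has a pure power $u_\ell^{2\ell}$ in $\langle LT(I')\rangle$, so $R/I'$ is a finite-dimensional $\CC$-vector space and $V(I')$ is finite. The degree of the variety is the leading coefficient of the Hilbert polynomial, namely $2^m m!$.

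For the point count, I would use that $\dim_\CC R/I' = |B'| = 2^m m!$, since $B'$ is a basis of $R/I'$ by the Gröbner basis property. For a zero-dimensional ideal, $R/I'\cong\prod_{P\in V(I')}R_P/I'R_P$, so this dimension equals the sum of the local multiplicities $\dim R_P/I'R_P$ over the points of $V(I')$. Hence $V(I')$ has $2^m m!$ points counted with multiplicity, each point contributes at least $1$, and the number of distinct points is at most $2^m m!$. At least one point exists because $u=\bfz$ is a solution: every $p'_k$ has zero constant term, since $\zeta_{\bfz}=0$. The statement says $V(I)$, but I read this as $V(I')$, in analogy with Corollary~\ref{Cor:GBConsequences}.

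The only point needing care is identifying "degree" with $\dim_\CC R/I'$ and the sum of local multiplicities. This is the same identification already used in Corollary~\ref{Cor:GBConsequences}, so I would cite it in the same way rather than reprove it.
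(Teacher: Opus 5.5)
Your proposal is correct and follows the paper's route. The paper gives no separate proof: the corollary is read off from Theorem~\ref{thm:hilbetalspec} (the leading-term ideal $\langle u_1^2,u_2^4,\dots,u_m^{2m}\rangle$, giving the constant Hilbert polynomial $2^m m!$), exactly as Corollary~\ref{Cor:GBConsequences} is read off from Theorem~\ref{hilbert]}; you additionally spell out the standard-monomial count, the decomposition of $\CC[u]/I'$ into local multiplicities, and $\bfz\in V(I')$, and you are right that the second ``$V(I)$'' in the statement should read $V(I')$.
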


Given the properties of this specialization along with the fact that there are known nonzero points in $V(I')$ when $n$ is divisible by 4~\cite{flmrp}, we propose the following conjecture.

\begin{conjecture}~\label{conj:1}
   There exist nonzero $q\ZZ$-periodic potentials $V$ of the form $(v_1,v_2,\dots,-v_2, -v_1)$ for $q > 3$ such that $\Delta$ and $\Delta + V$ are Floquet isospectral.
\end{conjecture}

Just as the even potentials found in ~\cite{flmrp} support Conjecture~\ref{conj:1}, they also suggest the following conjecture.
\smallskip
\begin{conjecture}~\label{conj:2}
    There exist nonzero $q\ZZ$-periodic potentials $V$ of the form $(v_1, v_2,\dots, \overline{v_2}, \overline{v_1})$ for $q > 3$ such that $\Delta$ and $\Delta + V$ are Floquet isospectral.
\end{conjecture}

For the sake of completeness we offer a short proof that the potentials found in~\cite{flmrp} affirm Conjecture~\ref{conj:1} in the case $4|q$ and Conjecture~\ref{conj:2} in the case $q$ is even.

\begin{proof}[Proof of Conjecture ~\ref{conj:1} when $4|q$ and Conjecture~\ref{conj:2} for even $q >3$.]

Let $q = 2m$ for $m>1$. From ~\cite[Theorem 3.1]{flmrp} we have that the $q\ZZ$-periodic potential $V$ such that $V(1) = 1+i$, $V(2) = 1-i$, $V(m+1) = -1 + i$, $V(m+2) = -1 -i$ and $V(j) = 0$ for $j$ on all other vertex orbits is Floquet isospectral to $\bfz$.  

Applying the cycle permutation $(1 \ 2 \cdots q)$ $m-1$ times to $V$ gives us a $q\ZZ$-periodic potential $V'$ such that $V'(1) = -1-i$, $V'(m) = 1+i$, $V'(m+1) = 1-i$, $V'(q) = -1 -i$, and $V'(j) = 0$ otherwise, which is isospectral to $\bfz$ by ~\eqref{dihedral}. This proves Conjecture~\ref{conj:2} for even $q>3$.

Now suppose $m = 2k$. Applying the cycle permutation $(1\ 2 \cdots q)$ $k-1$ times to $V$ gives us a $q\ZZ$-periodic potential $V'$ such that $V'(k) = -1-i$, $V'(k+1) = 1+i$, $V'(m+k) = 1-i$, $V'(m+k+1) = -1 -i$, and $V'(j) = 0$ otherwise, which is isospectral to $\bfz$ by ~\eqref{dihedral}. This proves Conjecture~\ref{conj:1} for $q>3$ such that $4 | q$.
\end{proof}

Next, we will discuss some experimental findings that support these conjectures. 
\section{Experimental Results}~\label{Sec:5}

In this section, we discuss the values of $V(I)$ that we found experimentally for small values of $n$. For the results of this section, we performed symbolic and numerical computations using Macaulay2~\cite{M2} and Bertini~\cite{Bertini}. The numerical computations utilized the package \cite[NumericalAlgebraicGeometry]{NumericalAlgebraicGeometrySource, NumericalAlgebraicGeometryArticle} and the solutions to ~\eqref{eq:sysSpecInvars} were certified through the package \cite[NumericalCertification]{NumericalCertificationSource}, which utilizes alpha theory to prove these numerical solutions correspond to true solutions~\cite{alphaCertified-paper}. In what follows, unless otherwise specified, solutions refer to nonzero points of $V(I)$ or $V(I')$, symmetries with respect to $V(I)$ refer to Remark~\ref{rm:symsVI}, and symmetries with respect to $V(I')$ refer to Remark~\ref{rm:symsVI'}.

\begin{table}
\centering
\begin{tabular}{ | m{.2cm} | m{1cm}| m{1cm} |m{1.1cm} |m{1.4cm} |m{1.5cm} | m{2cm} |m{2.2cm} | m{2.3cm} | } 
\hline
 $n$ & mult. at $\bfz$ & mult. of points $\neq \bfz$. & unique points & unique points mod $D_n$ & unique points mod all symmetries & singular points mod $D_n$ & mult.:counts & Conjecture~\ref{conj:2} unique points.\\ [0.5ex] 
 \hline\hline
  3 & 6 & 0  & 1  & 1 & 1 & 1 & \{6:1\} & 0  \\ 
  \hline
  4 & 16 & 8  & 9  & 2 & 1 & 1 &\{1:8, 16:1\} & 4 \\ 
  \hline
   5 & 60 & 60 & 61 & 7 & 3 & 1 &\{1:60, 60:1 \} & 4\\ 
  \hline
   6 & 120 & 600 & 529 & 45 & 20 & 3 &\{ 1:504 , 2:12, 6:12, 120:1\} & 32\\ 
  \hline
  7 & 588 & 4452 & 4453  & 319 & 105 &1 & \{1:4452, 588:1 \} & 120\\ 
  \hline
\end{tabular}
\caption{Computational findings for $V(I)$.}\label{tab:VI}
\end{table}

Our computational findings related to the elements of $V(I)$ and $V(I')$ are summarized in Tables~\ref{tab:VI} and ~\ref{tab:VI'}, respectively. The multiplicity at $\bfz$ was calculated by symbolically solving for the degree of the tangent cone ideals of $V(I)$ and $V(I')$, respectively (with the exception of $n=7$ for $V(I)$).  The second to last column is formatted as $i$:$j$ to indicate that there are $j$ multiplicity $i$ points in the respective variety\footnote{Note that the degrees of the singularities other than $\bfz$ were not certified and instead are based on output from Bertini. }. The remainder of the entries were computed and certified numerically.

\begin{table}
\centering
\begin{tabular}{ | m{.3cm} | m{1cm}| m{1cm} |m{1.1cm} |m{1.5cm} | m{2cm} |m{3.2cm} | } 
\hline
 $n$ & mult. at $\bfz$ & mult. of points $\neq \bfz$. & unique points & unique points mod symmetries & singular points mod symmetries & mult.:counts\\ [0.5ex] 
 \hline\hline
  4 & 4 & 4  & 5  & 2 & 1 &\{1:4, 4:1\} \\ \hline
  5 & 4 & 4  & 5  & 2 & 1 &\{1:4, 4:1\} \\\hline
  6 & 8 & 40  & 41  & 11 & 1 &\{1:40, 8:1\} \\ \hline
  7 & 8 & 40  & 41  & 11 & 1 &\{1:40, 8:1\} \\\hline
  8 & 16 & 368  & 357 & 90 & 2 &\{1:352, 4:4, 16:1\} \\ \hline
  9 & 16 & 368  & 369 & 93 & 1 &\{1:368 16:1\} \\ \hline
  10 & 32 & 3808  & 3781 & 946 & 2 &\{1:3776, 8:4 32:1\} \\ \hline
  11 & 32 & 3808  & 3809 & 953 & 1 &\{1:3808, 32:1\} \\  \hline
\end{tabular}
\caption{Computational findings for $V(I')$.}\label{tab:VI'}
\end{table}

The one nonzero solution up to symmetry when $n=4$ is exactly that found in~\cite{flmrp}.  In Figure~\ref{5Z}, we plot all nonzero solutions up to dihedral symmetries for $n=5$. In Figure~\ref{6Z}, we show the nonzero singular solutions for $n=6$, the left-hand and right-hand images correspond to the points of $V(I)$ of multiplicities $2$ and $6$, respectively.  Figure~\ref{7Z} displays all nonzero points of $V(I')$ for $n=7$ up to symmetries.  Figure~\ref{9Z0} is a rendering of all nonzero solutions of $V(I)$ for $n=9$ where at least two consecutive potential values are $0$; the nonzero values of the leftmost solution are solutions to $z^6+1=0$.

\begin{figure}[h]
    \centering
    \includegraphics[width=.9\linewidth]{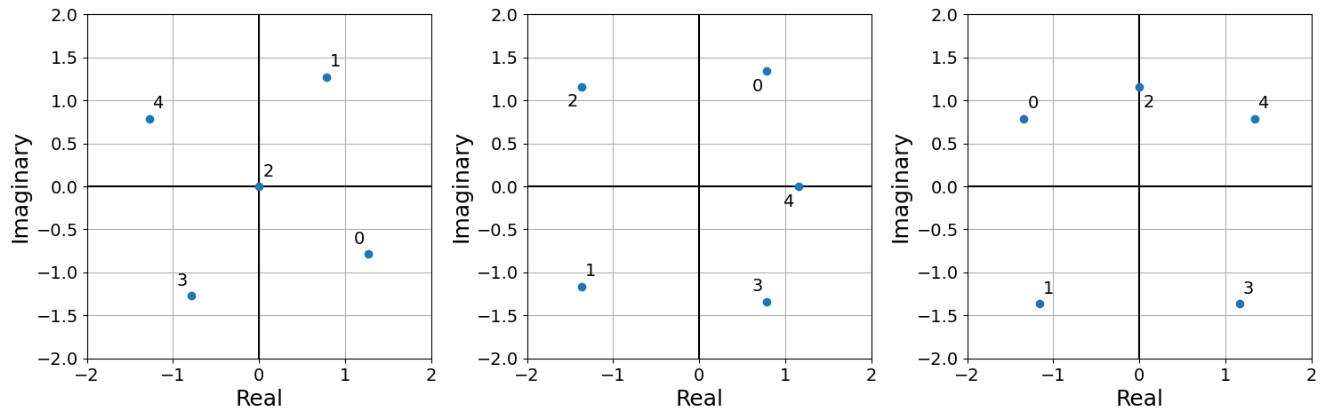}
    \caption{All solutions for $n=5$ up to symmetries.}
    \label{5Z}
\end{figure}

\begin{figure}[h]
    \centering
    \includegraphics[width=0.7\linewidth]{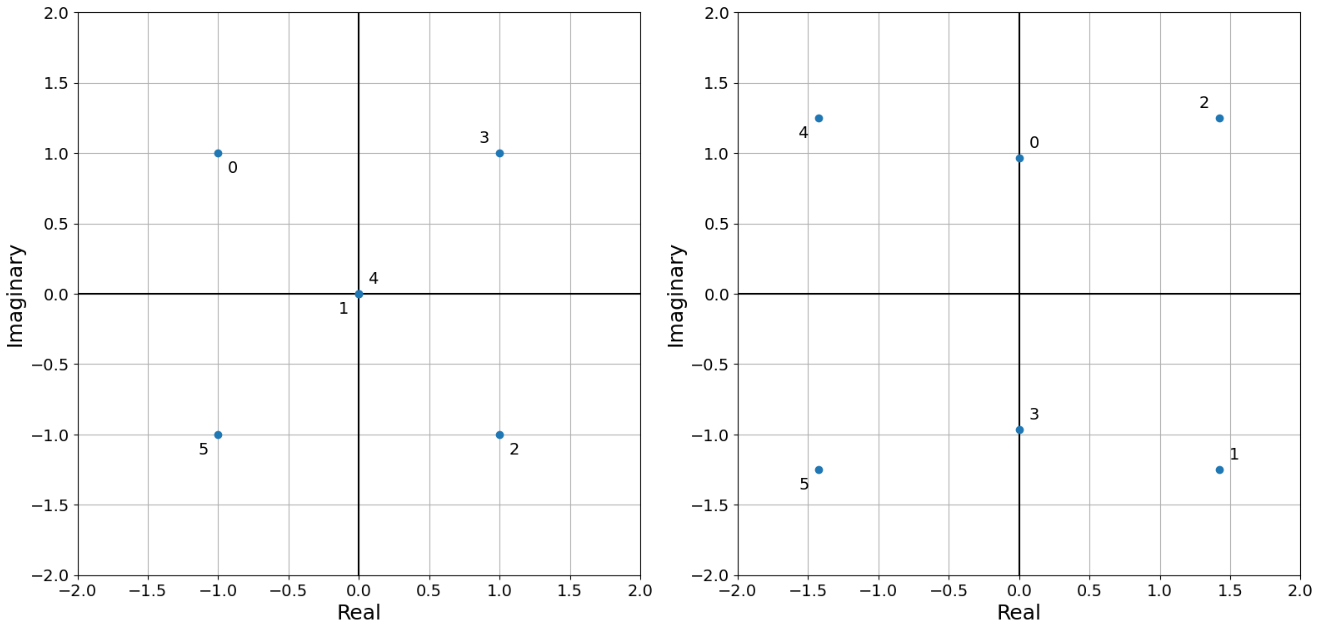}
    \caption{The nonzero singular solutions for $n=6$.}
    \label{6Z}
\end{figure}

\begin{figure}[h]
    \centering
    \includegraphics[width=.9\linewidth]{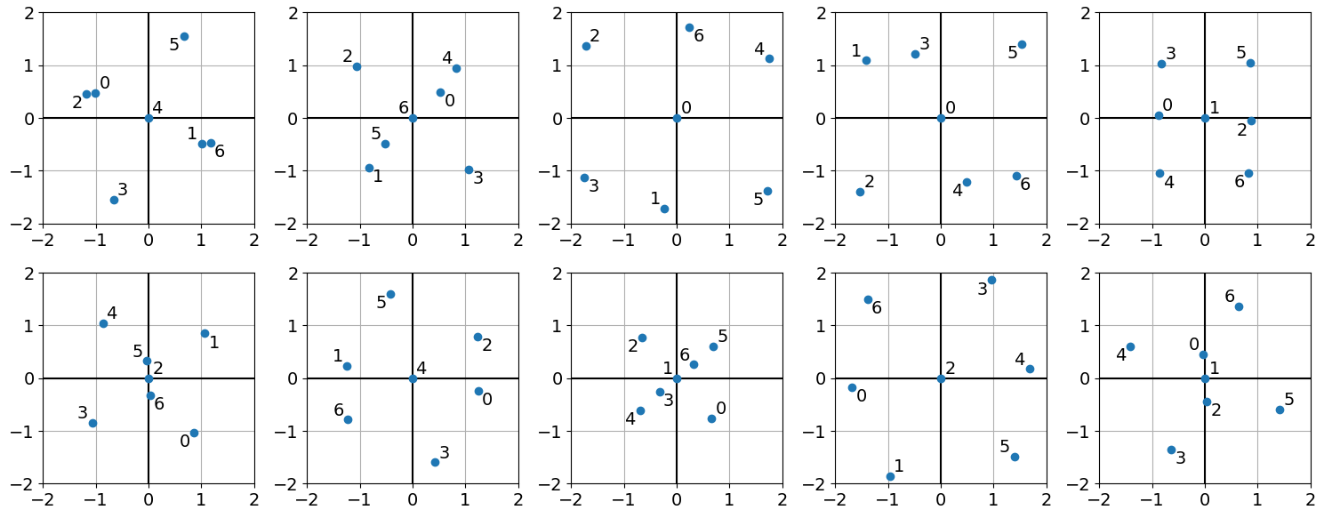}
    \caption{All specialized $n=7$ solutions up to symmetries.} 
    \label{7Z}
\end{figure}

\begin{figure}[h]
    \centering
    \includegraphics[width=.9\linewidth]{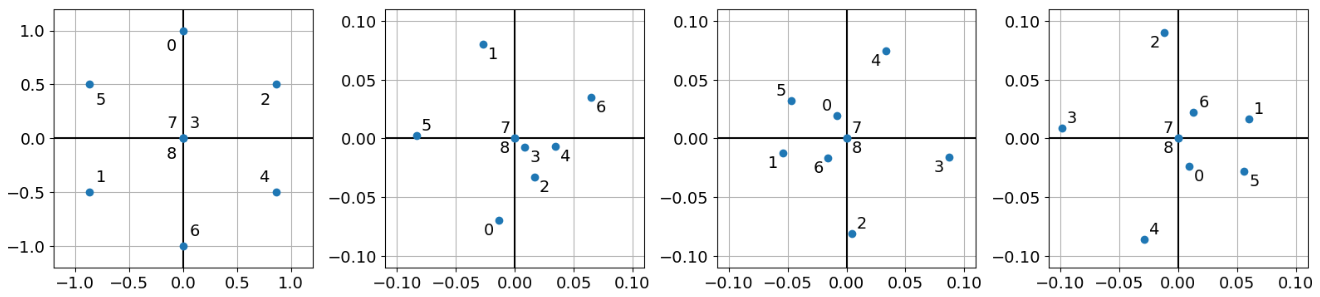}
    \caption{All solutions for $n=9$ with two zeroes in a row up to symmetry.}
    \label{9Z0}
\end{figure}

\begin{figure}[h]
    \centering
    \includegraphics[width=0.7\linewidth]{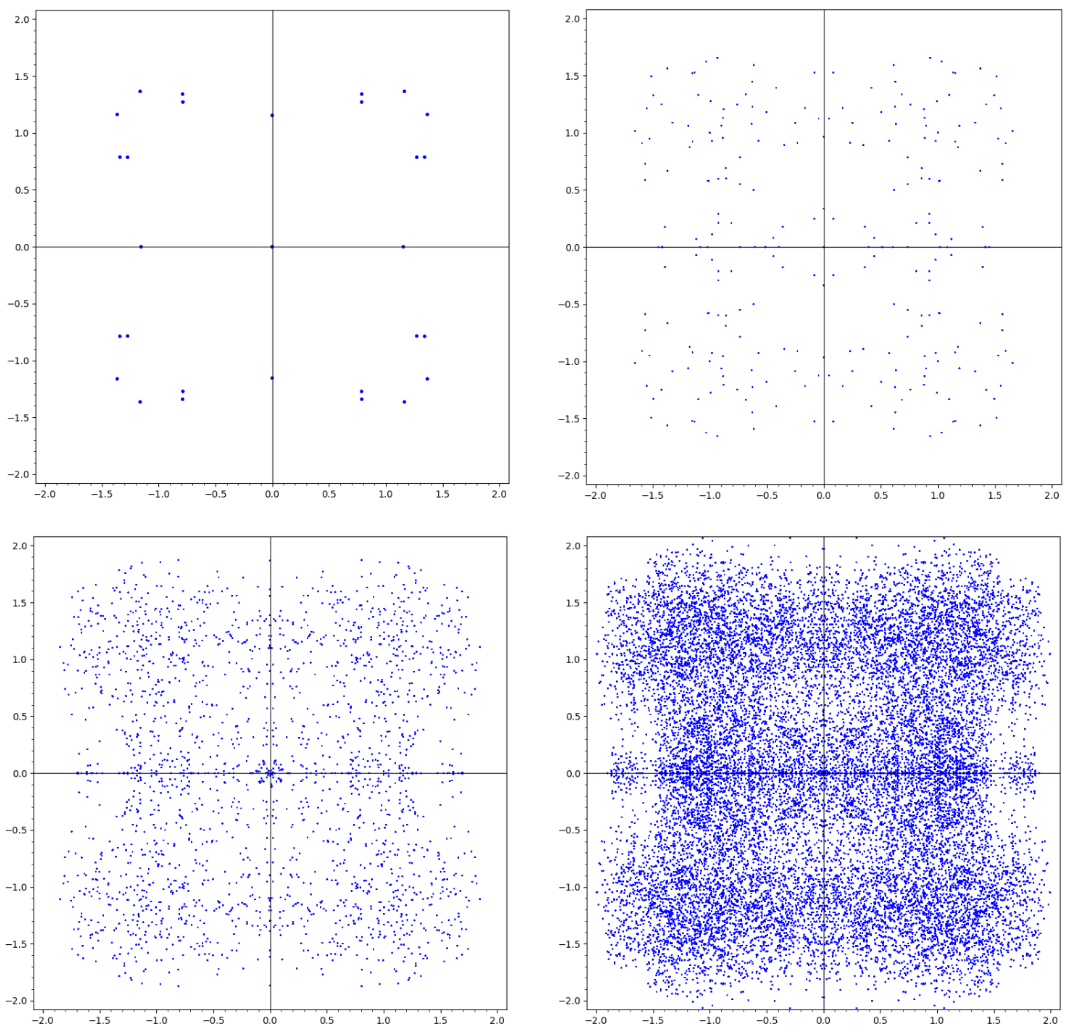}
    \caption{All solutions for $n=5$, $n=6$, $n=7$, and $n=8$.} 
    \label{all}
\end{figure}
In Figure ~\ref{all}, we plot all the values found that the potentials Floquet isospectral to $\bfz$ can take together in the complex plane for $n=5$ up to $n = 8$.  The code used to obtain the data in these tables is available on GitHub\footnote{mattfaust.github.io/FloIsoDL/flz.m2}.

Based on the data of Table~\ref{tab:VI'}, we propose the following refinement of Conjecture~\ref{conj:1}.
\begin{conjecture}~\label{conj:3}
When $q$ is odd, then $V(I')$ has only regular nonzero points, and when it is an even number greater than $6$ there is a single singular point, up to symmetry, of multiplicity $2^{\frac{q}{2}-2}$. Moreover, the multiplicity at $\bfz$ is $2^{\lfloor \frac{q}{2}\rfloor}$.
\end{conjecture}
\section{The Case of General Full Rank Sublattices}~\label{Sec:6}
Although so far we have restricted ourselves to the case of $\Gamma$-periodic potentials for $\Gamma = q_1\ZZ~\oplus~q_2\ZZ~\oplus\dots\oplus q_d\ZZ$ and $q_i \in \ZZ^{+}$, the question is also of interest for more general potentials, that is potentials that are $\Gamma$-periodic for $\Gamma = \bq_1\ZZ \oplus \dots \oplus \bq_d \ZZ$ for $\bq_i \in \ZZ^d$, such that $\Gamma$ is a full rank sublattice of $\ZZ^d$. In this setting, the Floquet matrix can be obtained through an identical process as that outlined in Section~\ref{Sec:Intro}, the only difference being that we look at the spectrum of $\Delta+V$ acting on functions $f:\ZZ^d \to \CC$ with Floquet-Bloch boundary conditions
    $f(n+\bq_j) = e^{2 \pi ik_j} f(n)$ with $n\in \ZZ^d, k_j\in[0,1),$ and $j = 1,2,\dots, d.$ If the $\bq_i = c_i\mathbf{e}_i$ for some integers $c_i$, then we have returned to the setting of Section~\ref{Sec:Intro}.

Notice that when $\Gamma = q_1\ZZ~\oplus~q_2\ZZ~\oplus\dots\oplus q_d\ZZ$ for $q_i \in \ZZ_{+}$, by~\cite{CFK}, when $\prod_{i=1}^d q_i > 3^d$, then, regardless of what $\Gamma$ is, as one $q_i$ must be greater than $3$, there will exist nonzero $\Gamma$-periodic potentials Floquet isospectral to $\bfz$. In the more general setting where the $q_i$ are allowed to be vectors in $\ZZ^d$, there is no similar bound that once exceeded implies the existence of nonzero potentials Floquet isospectral to $\bfz$. We now demonstrate this. 

\begin{theorem}
    For every $d>1$ and $n \in \ZZ_+$, there exists a full-rank sublattice $\Gamma$ of $\ZZ^d$ with $|\ZZ^d / \Gamma| > n$ such that the only $\Gamma$-periodic potentials Floquet isospectral to $\bfz$ is $\bfz$.
\end{theorem}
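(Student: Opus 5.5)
The plan is to choose $\Gamma$ so that the quotient $\ZZ^d/\Gamma$ is cyclic of large order $N>n$. Then, for one particular choice of quasimomenta $z\in\TT^d$, the Floquet matrix becomes diagonal. Floquet isospectrality at that single $z$ will force every value of $V$ to be $0$.

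\emph{Construction.} Fix $N>n$. Let $\Gamma$ be generated by $\bq_1=N\mathbf{e}_1$ and $\bq_j=\mathbf{e}_1-\mathbf{e}_j$ for $j=2,\dots,d$. These vectors are linearly independent and the matrix they form has determinant $\pm N$, so $\Gamma$ has full rank and $|\ZZ^d/\Gamma|=N>n$. Modulo $\Gamma$ every $\mathbf{e}_j$ agrees with $\mathbf{e}_1$. Hence a $\Gamma$-periodic $V$ depends only on $n_1+\dots+n_d \bmod N$, and we write $V=(v_1,\dots,v_N)$.

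\emph{Floquet reduction.} Write the Floquet-Bloch conditions with $f(n+\bq_j)=z_j f(n)$. For $j\ge 2$ this gives $f(n+\mathbf{e}_j)=z_j^{-1}f(n+\mathbf{e}_1)$ and $f(n-\mathbf{e}_j)=z_j f(n-\mathbf{e}_1)$. Therefore
\[(\Delta f)(n)=\Big(1+\sum_{j\ge2} z_j^{-1}\Big)f(n+\mathbf{e}_1)+\Big(1+\sum_{j\ge2} z_j\Big)f(n-\mathbf{e}_1).\]
The Floquet matrix $L_V(z)$ is thus an $N\times N$ periodic Jacobi-type matrix, with diagonal $V$ and off-diagonal weights $c(z)=1+\sum_{j\ge2}z_j^{-1}$ and $\overline{c(z)}$ on $\TT^d$, the corner entries carrying the additional factor $z_1^{\pm1}$.

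\emph{Killing the hopping.} Choose $z_1=1$ and $z_j^{-1}=\omega^{j-1}$ for $j=2,\dots,d$, where $\omega=e^{2\pi i/d}$. Since $d>1$, we have $\sum_{k=0}^{d-1}\omega^k=0$, so $c(z)=\overline{c(z)}=0$. At this $z$ the matrix $L_V(z)$ is $\mathrm{diag}(v_1,\dots,v_N)$, while $L_{\bfz}(z)=0$. Floquet isospectrality requires $\sigma(L_V(z))=\sigma(L_{\bfz}(z))=\{0\}$, counted with multiplicity, at this particular $z\in\TT^d$. Hence every $v_i=0$, so $V=\bfz$, and conversely $\bfz$ is trivially isospectral to itself.

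The only step needing care is the Floquet reduction: tracking the phase factors correctly, including the corner entries of $L_V(z)$ through the index-$N$ wrap-around. Those entries also carry the factor $c(z)$ or $\overline{c(z)}$ and so vanish as well. Everything else is immediate. It is worth remarking that this does not contradict~\cite{CFK}, because this $\Gamma$ is not of diagonal form.
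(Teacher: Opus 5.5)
Your proof is correct and is essentially the paper's argument. The paper takes $\bq_1=(a,0,\dots,0)$ and $\bq_j=\mathbf{e}_1+\mathbf{e}_j$, which is your lattice up to the reflection $n_j\mapsto -n_j$ for $j\ge 2$; it then gets a cyclic Jacobi-type Floquet matrix with hopping $1+\sum_{j\ge 2}z_j^{\pm1}$ and chooses quasimomenta that kill the hopping, so the matrix becomes $\mathrm{diag}(V)$. Your explicit choice of $d$-th roots of unity usefully confirms that such a point lies on $\TT^d$, which the paper leaves implicit for $d>2$.
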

\begin{proof}
We prove the claim by giving explicit periods with this property.

For $d=2$, consider a potential that is periodic with respect to $\Gamma = \bq_1 \ZZ \oplus \bq_2 \ZZ$, where $\bq_1 = (a,0)$ for some integer $a>2$ and $\bq_2 = (1,1)$\footnote{To our knowledge, this family first appeared in ~\cite{fk2} in relation to their exceptional band edge behavior.}. It is easy to show that, in this case, we get a matrix of the form: \[         L_{V}=\left(\begin{smallmatrix}
            V(1) & 1+z_2^{-1} & 0 & \dots & 0 & z_1^{-1}(1+z_2) \\
            1+z_2 & V(2) & 1+z_2^{-1} & 0 & \dots & 0 \\
            0 & 1+z_2 & \ddots & \ddots & \ddots & \vdots \\
            \vdots & \ddots & \ddots & \ddots & \ddots & 0\\ 
            0 & 0 & \dots & 1+z_2 & V(a-1) & 1+z_2^{-1} \\
            z_1(1+z_2^{-1}) & 0 & \dots & 0 & 1+z_2 & V(a) \\
        \end{smallmatrix}\right).\]
Noticing that we get a diagonal matrix after setting $z_2 = -1$, we see that the only potential of this period that is Floquet isospectral to $\bfz$, is $\bfz$ (as the ideal of spectral invariants is just the first $a$ elementary symmetric polynomials). More generally, this implies that, given any potential periodic with respect to this lattice, the potentials Floquet isospectral to it must be its coordinate-wise permutations; that is, if $V'$ is Floquet isospectral to $V$, then there is a $\sigma \in S_a$ such that $V'(i) = V(\sigma(i))$. The same is true when $a=2$, albeit the Floquet matrix has a slightly different form. 

For $d > 2$, consider a $\Gamma$-periodic potential for $\Gamma = \bq_1\ZZ\oplus \bq_2\ZZ\oplus\dots\oplus \bq_d\ZZ$ with $\bq_1 = (a,0,0,\dots, 0),\bq_2= (1,1,0,\dots,0),$ $\bq_3 = (1,0,1,0,\dots,0), \dots,$ and $\bq_d= (1,0,\dots, 0,1)$, where $a \in \ZZ^+$. The matrix will have a similar form, just replace every $1+z_2$ with $1+z_2 + z_3 + \dots + z_d$ and every $1+z_2^{-1}$ with $1+z_2^{-1} + z_3^{-1} + \dots + z_d^{-1}$. It follows that solving for values of $z_2,\dots, z_d$ such that $z_2 +\dots + z_d = z_2^{-1} + \dots + z_d^{-1} = -1$ leads to the same observation as the $d=2$ case.
\end{proof}

Notice that when $\Gamma = q_1\ZZ~\oplus~q_2\ZZ~\oplus\dots\oplus q_d\ZZ$ for $q_i \in \ZZ^{+}$, a nonzero $q_1\ZZ \oplus \ZZ \oplus \dots \oplus \ZZ$-periodic potential Floquet isospectral to $\bfz$ extends to a nonzero $\Gamma$-periodic potential Floquet isospectral to $\bfz$ (as $\Gamma$ is a subgroup of $q_1\ZZ \oplus \ZZ \oplus \dots \oplus \ZZ$). See~\cite[Section 4.2]{CFK} for more details.
When $\Gamma = (a,0) \ZZ \oplus (1,1) \ZZ$ for some integer $a>1$, $\Gamma$ is not a subgroup of $b\ZZ \oplus \ZZ$ or $\ZZ \oplus b\ZZ$ for any integer $b > 1$. This leads us to the following problem. \smallskip

\begin{problem}~\label{prob:2}
    For the $\ZZ^d$-periodic grid graph, classify all periods $\Gamma$ such that there exist nonzero $\Gamma$-periodic potentials that are Floquet isospectral to the discrete Laplacian. 
\end{problem}

Problem~\ref{prob:2} has been solved in the case where $d=1$~\cite{CFK}, but it is still open for $d \geq 2$. Even in the case where $\Gamma = \bq_1\ZZ \oplus \dots \oplus \bq_d \ZZ$ for $\bq_i = c_ie_i$ for $c_i \in \ZZ$, Problem~\ref{prob:2} is open for $d >2$; the barrier being the case when all $c_i$ are $3$. 
In these two versions of Problem~\ref{prob:2}, the corresponding ideals of spectral invariants generally contain more generators than just perturbations of elementary symmetric polynomials. 

We briefly comment on the general case when $d=2$. The only full rank sub-lattices that are not subgroups of $q \ZZ \oplus \ZZ$ or $\ZZ \oplus q \ZZ$ for all $q > 3$ are those with the following Hermite normal form: $(a,0)\ZZ \oplus (b,c)\ZZ$ for $a,b,c$ such that $0 \leq b < a$ and $\gcd(a,b),c \in \{1,2,3\}$. We symbolically checked all possible combinations of $a,b,c$ where $ac\leq 8$, and found no examples with nonzero potentials isospectral to $\bfz$.

\section*{Acknowledgments}
This research was partially conducted as part of Summer Undergraduate Research Institute in Experimental Mathematics (SURIEM), and was partially supported by NSF DMS-2244461 and by Michigan State University. The first named author was partially supported by NSF DMS-2052519. The authors thank Dr. Robert Bell for organizing SURIEM. Faust thanks Wencai Liu for introducing him to this problem.

\bibliographystyle{abbrv} 
	\bibliography{main}
	
\end{document}